\crefname{equation}{}{}
\crefname{lem}{Lemma}{Lemmas}
\crefname{thm}{Theorem}{Theorems}
\crefname{assum}{Assumption}{Assumptions}
\crefname{rem}{Remark}{Remarks}
\crefname{line}{Line}{Lines}
\newcommand{\vect}[1]{{\rm vec}({#1} )}
\newcommand{\diag}[1]{{\rm diag}\left({#1} \right)}
\newcommand{\Vc}[0]{ {V_{\rm c}}}
\newcommand{\Vn}[0]{ {V_{\rm n}}}
\newcommand{\Vr}[0]{ {V_{\rm r}}}
\newcommand{\nm}[1]{\left\lVert {#1} \right\rVert}
\newcommand{\snm}[1]{\left\lvert {#1} \right\rvert}
\newcommand{\dual}[1]{\left\langle {#1} \right\rangle}
\newcommand{\argmin}[0]{ {\mathop{{\rm  argmin}}\,}}
\newcounter{mnote}
\let\oldmarginpar\marginpar
\renewcommand\marginpar[1]{\-\oldmarginpar[\raggedleft\footnotesize #1]%
	{\raggedright\footnotesize #1}}
\newcommand{\prox}[0]{ {\bf prox}}
\newcommand{\proj}[0]{ {\rm proj}}
\newcommand{\st}[0]{ {{\rm  s.t.}\,}}
\newcommand{\R}{\,{\mathbb R}}
\newcolumntype{I}{!{\vrule width 1,5pt}}
\newlength\savedwidth
\newlength\savewidth
\begin{document}
%
%
%
%
%

	\title{An efficient semismooth Newton-AMG-based inexact primal-dual algorithm for generalized transport problems}

\author{Jun Hu\thanks{School of Mathematical Sciences, Peking University, Beijing, 100871, China (\email{hujun@math.pku.edu.cn}, \email{luohao@math.pku.edu.cn}, \email{zhang-zihang@pku.edu.cn}).}
	\and Hao Luo\footnotemark[1]
	\and Zihang Zhang\footnotemark[1]}

\headers{Semismooth Newton-AMG method for generalized transport problems}{Semismooth Newton-AMG method for generalized transport problems}

\maketitle

\begin{abstract}
	This work is concerned with the efficient optimization method for solving a large class of optimal mass transport problems. An inexact primal-dual algorithm is presented from the time discretization of a proper dynamical system, and by using the tool of Lyapunov function, the global (super-)linear convergence rate is established for function residual and feasibility violation. The proposed algorithm contains an inner problem that possesses strong semismoothness property and motivates the use of the semismooth Newton iteration. By exploring the hidden structure of the problem itself, the linear system arising from the Newton iteration is transferred equivalently into a graph Laplacian system, for which a robust algebraic multigrid method is proposed and also analyzed via the famous Xu--Zikatanov identity. Finally, numerical experiments are provided to validate the efficiency of our method.
\end{abstract}

\medskip\noindent{\bf Keywords:} Optimal transport, primal-dual method, Lyapunov function, semismooth Newton iteration, graph Laplacian, algebraic multigrid, Xu--Zikatanov identity
		
	
%
%


\section{Introduction}
\label{sec:intro}
The optimal mass transport proposed by Monge, can be dated back to as early as the 1780s. Later, Kantorovich \cite{kantorovich_translocation_1942} introduced a convex relaxation of Monge's original formulation and applied it to economics. Since then, this topic attracted more attentions and it also played an increasing role in imaging processing \cite{hug_multi-physics_2015,maas_generalized_2015,2000The}, machine learning \cite{arjovsky_wasserstein_2017,courty2017optimal,kandasamy2018neural} and statistics \cite{2016Amplitude,Szkely2004TESTINGFE}.
We refer the readers to \cite{villani_topics_2003,villani_optimal_2009} for comprehensive theoretical investigations. 

In the discrete setting, Kantorovich's relaxation (cf.\cref{eq:DOT-X}), which is also known as the Monge--Kantorovich problem \cite{brezis_remarks_2018}, seeks an optimal transfer plan (an $n$-by-$n$ nonnegative matrix) that minimizes the total transport cost between two given mass distributions in the $n$-dimensional probability simplex. Except for this classical formulation, nowadays, there are some extensions, such as partial optimal mass transport \cite{caffarelli_free_2010} and capacity-constrained transport problem \cite{korman_optimal_2014}. All these transport-like programmings share the common feature of marginal constraint and can be formulated as standard but large scale linear programmings (LP); see \cref{sec:prob} for more details. Besides, entropy regularization, i.e., the logarithmic barrier function, has been used to relax the  nonnegative restriction and provides an approximate optimization problem that possesses some nice properties including strong convexity of the primal form (which corresponds to the smoothness of the dual problem)  and closed projection onto the marginal constraint.

Let us first review some existing methods based on entropy regularization. The well-known Sinkhorn algorithm \cite{cuturi_sinkhorn_2013,sinkhorn_diagonalequivalenceto_1967} and its greedy adaptation, called Greenhorn \cite{altschuler_near-linear_2017}, are fixed-point type iterations. Sinkhorn's algorithm was proved to converge linearly (cf. \cite[Theorem 35]{merigot_optimal_2021}) but the rate
is exponentially degenerate with respect to the regularization parameter, and the theoretical complexity bound $\mathcal O(n^2/\epsilon^2)$ can be found in \cite{dvurechensky_computational_2018,lin_efficient_2019}. Here, we mention that the iterative Bregman projection \cite{benamou_iterative_2015} with Kullback--Leibler divergence is equivalent to Sinkhorn's algorithm.
By virtue of the smoothness of the dual problem, accelerated mirror descent methods have been proposed in \cite{dvurechensky_computational_2018,lin_efficient_2019}, and the provable complexity is $\mathcal O(n^{2.5}\sqrt{\ln n}/\epsilon)$. For more methods using the entropy regularization, we refer to 
\cite{chernov_fast_2016,gasnikov_efficient_2016,guminov_accelerated_2021}, and one can also consult 
\cite{benamou_optimal_2021,merigot_optimal_2021,Peyre2019} on quite complete surveys about numerical methods. 

It is worth noticing that, the solution to the entropy regularized problem exists uniquely, and as the regularization parameter vanishes, it converges (exponentially) to an optimal transport plan with maximal entropy among all the optimal plans; see \cite{cominetti_asymptotic_1994} and \cite[Proposition 4.1]{Peyre2019}. 
However, practically, one cannot choose arbitrarily small parameters due to the round-off issue and instability effect. That being said, the log-domain technique \cite{chizat_scaling_2018} enhances the stability, and both vectorization and parallelization can be applied to Sinkhorn's algorithm. 

The complexity of matrix-vector multiplication in each iteration of the Sinkhorn algorithm is $\mathcal O(n^2)$ for general cases. If the transport cost function enjoys a separable structure with $d$ blocks, then efficient implementation achieves the reduced cost $\mathcal O(n^{1+1/d})$; see \cite[Section 4.3]{Peyre2019}. More recently, for translation invariance cost functions (for instance, the Euclidean distance) that imply the cyclic property, Liao et al. \cite{liao_fast_2022} proposed an optimal $\mathcal O(n)$ algorithm. Hence, to provide an {\it approximate} optimal transport plan with moderate regularization parameter, entropy-based methods are efficient, especially for some cost functions with nice properties. This makes them very popular in real applications, especially for computing Wasserstein distances between histograms.

On the other hand, augmented Lagrangian method (ALM) and alternating direction method of multipliers (ADMM) can be applied to transport-like problems as well. By the celebrated stability result of linear inequality system \cite{mangasarian_lipschitz_1987,Robinson1973}, we have global linear convergence for ADMM \cite{eckstein_alternating_1990}. For general convex objectives, the provable nonergodic rate of many accelerated variants of (linearized) ALM and ADMM  is $\mathcal O(1/k)$; see 
\cite{Li2019,luo_accelerated_pd_2021,luo_unified_2021,Xu2017}.
The method \cite[Algorithm 1]{Xu2017} possesses a faster sublinear rate $\mathcal O(1/k^2)$ but involves a large scale quadratic programming (of dimension $n^2$) of the primal variable.

There are also classical LP solvers such as the interior-point method \cite{lee2014path,pele2009fast} and semismooth Newton-based algorithms \cite{bai_computing_2007,brauer_sinkhorn-newton_2018,li_asymptotically_2020,liu_multiscale_2022}. These methods have to solve a symmetric positive definite (SPD) system per (inner) iteration, and prevailing linear solvers are (sparse) Cholesky decomposition and preconditioned conjugate gradient (PCG). However, the corresponding SPD system might be nearly singular and ill-conditioned as the problem size increases, and thus the number of iterations grows dramatically. Therefore, efficient and robust linear solvers play important roles in these algorithms. We also refer the readers to \cite{ahuja_network_1993,Bertsekas_auction_1992,Peyre2019} for some combinatorial methods.
\subsection{Main results}
In this work, we propose an efficient inexact primal-dual method for the generalized transportation problem (cf.\cref{eq:GOT-X}), which includes a large class of transport-like programmings, such as optimal mass transport, partial optimal transport, and capacity-constrained transport problem. Our algorithm is based on proper time discretization of the accelerated primal-dual dynamical system \cite{luo_accelerated_pd_2021} and adopts the semismooth Newton (SsN) iteration as the inner solver. 

In the setting of inexact computations, we prove the contraction estimate
\[
\mathcal E_{k+1}-\mathcal E_k\leq -\alpha_{k}\mathcal E_{k+1},
\]
where $\alpha_k>0$ is the step size and $\mathcal E_{k}$ denotes the discrete Lyapunov function (cf.\cref{eq:Ek}). Besides, we establish the global convergence rate of the objective residual and the feasibility violation (see \cref{thm:conv-Inexact-PD}):
\[
\snm{	h(x_k)-h(x^*) }+ \nm{Gx_k+I_Yy_k+I_Zz_k -b}\leq C \prod_{i=0}^{k-1}\frac{1}{1+\alpha_i}.
\]
This implies linear rate as long as $\alpha_k\geq \alpha>0$ and superlinear convergence follows if $\alpha_k\to+\infty$. See \cref{rem:ak-linear,rem:ak-sublinear} for detailed discussions.

The inner problem (cf.\cref{eq:prox-lk1}) is a nonlinear equation with strongly semismooth property. This motivates us to adopt the SsN iteration, which requires solving an $N$-by-$N$ linear SPD system
\begin{equation}\label{eq:Txi}
	\mathcal T\xi=\left(	\epsilon I+ T\Lambda T^\top\right)\xi = z,
\end{equation}
where $\Lambda$ is diagonal and $\epsilon>0$ is a small number. Utilizing the hidden structure of  $\mathcal T$, we transfer it equivalently into $A = \epsilon D+A_0$, where $D$ is diagonal and $A_0$ is the Laplacian matrix of a bipartite graph, and then develop a robust algebraic multigrid (AMG) algorithm. Invoking the well-known Xu--Zikatanov identity \cite{xu_method_2002}, we prove the convergence rate of the two-level case (see Theorem \ref{thm:conv-2g-AMG}):
\begin{equation*}
	\nm{\xi_{i+1}-\xi^*}_{A}= \left(1-\frac{1}{c_1}\right)\nm{\xi_{i}-\xi^*}_{ A}
	,\quad\,c_1 \leq C+\frac{1}{1-\nm{I-\bar R_c A_c}_{A_c}},
\end{equation*}
where $\bar R_c$ and $A_c$ are respectively the coarse level solver and coarse level matrix, and $C\geq 1$ is independent of the number $\epsilon$ and the problem size $N$. 
\subsection{Outline}
The rest of this paper is organized as follows. In \cref{sec:prob}, we give the problem setting and introduce the generalized transportation problem. After that, in \cref{sec:SsN-PD}, we present our inexact SsN-based primal-dual method and prove the global convergence rate via a discrete Lyapunov function. Then, in \cref{sec:near-singu-SPD}, we focus on the linear SPD system arising from the SsN iteration and transfer it into an equivalent graph Laplacian system, for which a robust and efficient AMG algorithm is proposed and analyzed in \cref{sec:amg,sec:conv-2gd}, respectively. We provide several numerical tests in \cref{sec:num} to show the robustness of the AMG algorithm and the performance of our overall SsN-AMG-based inexact primal-dual method. Finally, we conclude our work in \cref{sec:con}.

\section{Problem Setting}
\label{sec:prob}
In this part, we list several typical transport-like programmings arising from either mathematical extensions or practical applications. Those problems share the common feature of marginal constraint and will be treated in a unified way.
\subsection{Transport-like problems}
\label{sec:prob-exam}
\subsubsection{Optimal transport}
In the setting of optimal mass transportation  \cite{brezis_remarks_2018,kantorovich_translocation_1942}, we are given a cost matrix $C\in\R^{m\times n}_+$ and two vectors $\mu\in\R^n_+,\,\nu\in\R_+^m$ satisfying the mass conservation condition: ${\bf 1}_n^{\top}\mu={\bf 1}_m^{\top}\nu$, and aim to solve the minimization problem
\begin{equation}\label{eq:DOT-X}
	\min_{X\in \mathcal B(\mu,\nu)}\,\dual{C,X}:=\sum_{i=1}^{m}\sum_{j=1}^nC_{ij}X_{ij},
\end{equation}
where 
$\mathcal B(\mu,\nu): = \left\{X\in\R^{m\times n}_+:\,X^{\top}{\bm 1}_m = \mu,\,X{\bm 1}_n= \nu \right\}$ denotes the {\it transportation polytope}, with ${\bm 1}_n({\bf 1}_m)\in\R^n(\R^m)$ being the vector of all ones. According to \cite[Chapter 8]{Brualdi2006}, $\mathcal B(\mu,\nu)$ is nonempty, convex and bounded. It follows immediately from \cite[Corollary 2.3]{Bertsimas1997} that \cref{eq:DOT-X} admits at least one solution, which is called an {\it optimal transport plan}.

When $m=n$ and $\mu = \nu = {\bf 1}_n$, the transportation polytope coincides with the Birkhoff polytope $\mathcal B_n := \{X\in\R_+^{n\times n}:X^{\top}{\bf 1}_n= {\bf 1}_{n},\,X{\bf 1}_n ={\bf 1}_n\}$, which consists of all $n$-by-$n$ doubly stochastic matrices. The celebrated Birkhoff--Von-Neumann theorem (cf. \cite[Theorem 17]{merigot_optimal_2021}) states that $\mathcal B_n$ is the convex hull of all permutation matrices. Therefore, it has frequently been used to relax some combinatorial or nonconvex problems \cite{fogel_convex_2015}. In particular, the optimal transport \cref{eq:DOT-X} is exactly the convex relaxation of the linear assignment problem \cite{burkard_assignment_2009}.
\subsubsection{Birkhoff projection} 
Given any $\Phi\in\R^{n\times n}_+$, the Birkhoff projection in terms of the Frobenius norm $\nm{\cdot}_F$ considered in \cite{khoury_closest_1998,li_efficient_2020} reads as
\begin{equation}\label{eq:proj-Bn}
	\min_{X\in\mathcal B_n}\frac{1}{2}\nm{X-\Phi}_F^2,
\end{equation}
which actually seeks the nearest doubly matrix of $\Phi$ and usually arises from the relaxations of some nonconvex programmings \cite{fogel_convex_2015,jiang_l_p-norm_2016,Lim_beyond_2014}. Besides, in the setting of numerical simulation for circuit networks \cite{Bai_partial_2001}, we have to fix some components:
\begin{equation}
	\label{eq:Xij}
	X_{ij} = 
	\Phi_{ij},\quad\forall\,i\in \mathcal I,\,j\in\mathcal J,
\end{equation}
where $\mathcal I,\,\mathcal J\subset\{1,2,\cdots,n\}$ are two given index sets, and this leads to the problem of finding the best approximation in the Birkhoff polytope $\mathcal B_n$ with prescribed entry constraint \cite{bai_computing_2007,glunt_nearest_1998}.
\subsubsection{Partial optimal transport}
In standard optimal transport \cref{eq:DOT-X}, the marginal distributions $\mu$ and $\nu$ are required to have the same mass. Mathematically, this is quite restrictive and practically, the {\it unbalanced} case ${\bf 1}_n^{\top}\mu\neq{\bf 1}_m^{\top}\nu$ stems from the positive-unlabeled learning \cite{chapel_partial_2020} and the representation of dynamic meshes for controlling the volume of objects with free boundaries \cite{levy_partial_2022}.

This leads to a problem called partial optimal transport \cite{benamou_iterative_2015,chizat_scaling_2018}. More precisely, we aim to transport only a given fraction of mass $a\in(0,a_{\max}]$ where $a_{\max}:=\min\{{\bf 1}_n^{\top}\mu,\,{\bf 1}_m^{\top}\nu\}$, and minimize the total cost 
\begin{equation}\label{eq:PDOT}
	\min_{X\in\R_+^{m\times n}} \dual{C,X}\quad\st\,X^{\top}{\bf 1}_m\leq \mu,\quad X{\bf 1}_n \leq \nu,\quad {\bf 1}_m^\top X{\bf 1}_n = a.
\end{equation}
When $a = \mu^\top{\bf 1}_n=\nu^\top{\bf 1}_m$, this amounts to optimal transport \cref{eq:DOT-X}. Well-posedness of \cref{eq:PDOT} (in the continuous setting) was established in \cite{caffarelli_free_2010} and extended by Figalli \cite{figalli_optimal_2010}. 
\subsection{Generalized transportation problem}
Clearly, the transport plan $X$ belongs to a box region $\mathcal K:=\{X\in\R^{m\times n}:\Theta\leq X\leq \Gamma\}$, where $\Theta,\,\Gamma\in\R^{m\times n}$ with $0\leq \Theta_{ij}<\infty$ and $\Theta_{ij}\leq \Gamma_{ij}\leq\infty$. Introduce two slack variables $y\in\R^n$ and $z\in\R^m$, together with their feasible regions $\mathcal Y = \R^{n}_+$ (or $\{0\}$) and $\mathcal Z = \R^{m}_+$ (or $\{0\}$). This allows us to include the unbalanced case ${\bf 1}_n^{\top}\mu\neq{\bf 1}_m^{\top}\nu$. We also impose the total mass constraint $\pi(X) = a$, where $a\in(0,\min\{{\bf 1}_n^{\top}\mu,\,{\bf 1}_m^{\top}\nu\}]$  and $\pi:\R^{m\times n}\to\R^r$ is a linear operator. 

Then the generalized transportation problem reads as follows
\begin{equation}\label{eq:GOT-X}
	\min_{(X,y,z)\in\Omega}H(X)\quad\st X^\top {\bf 1}_m +y=\mu,\quad X{\bf 1}_n  + z =\nu,\quad \pi(X) = a, 
\end{equation} 
where $\Omega:=\mathcal K\times \mathcal Y\times \mathcal Z$ and $H(X):=\sigma/2\nm{X-\Phi}_{F}^2+\dual{C,X}$ with $\sigma\geq 0$ and $\Phi\in\R_+^{m\times n}$. This generic formulation contains all problems mentioned previously in \cref{sec:prob-exam}, and also includes other transport-like problems such as capacity constrained optimal transport \cite{korman_optimal_2014} and the machine loading problem \cite{Ferguson_1956}. Throughout this paper, assume \cref{eq:GOT-X} exists at least one solution $(X^*,y^*,z^*)\in\Omega^*$, where $\Omega^*$ is the set of all global minimizers.

As usual, denote by $\vect{\times}$ the vector expanded by the matrix $\times $ by column. Let $h(x) :=  \sigma/2\nm{x-\phi}^2+ c^\top x$ with $c = \vect{C}$ and $\phi = \vect{\Phi}$, and introduce  $\mathcal X: = \{x\in\R^{mn}:\theta_i\leq x_i\leq \gamma_i,\,1\leq i\leq mn\}$ with $\theta=\vect{\Theta}$ and $\gamma = \vect{\Gamma}$ . Suppose the linear operator $\pi$ admits a matrix representation $\Pi\in\R^{r\times mn}$ such that $\pi(X) = \Pi\vect{X} $ for all $X\in \R^{m\times n}$. 
Then, we rearrange \cref{eq:GOT-X} as a standard affine constrained optimization problem:
\begin{equation}\label{eq:GOT-x}
	\min_{(x,y,z)\in\Sigma}\,h(x)\quad\st Gx+I_Yy+I_Zz =b,
\end{equation}
where $\Sigma:=\mathcal X\times \mathcal Y\times \mathcal Z$  and
\begin{equation}\label{eq:A-b}
	G := \begin{pmatrix}
		T\\\Pi
	\end{pmatrix},\quad
	T: = \begin{pmatrix}
		I_n\otimes {\bf 1}_m^\top \\
		{\bf 1}_n^\top\otimes 	I_m \\
	\end{pmatrix},\quad 
	I_Y: = \begin{pmatrix}
		I_n\\
		O\\
		O
	\end{pmatrix},\quad 
	I_Z: = \begin{pmatrix}
		O\\I_m\\O
	\end{pmatrix},\quad 
	b := \begin{pmatrix}
		\mu\\\nu\\a
	\end{pmatrix}.
\end{equation}
Clearly, $ (X^*,y^*,z^*)\in\Omega^*$ if and only if $ (x^*,y^*,z^*)\in\Sigma^*$ and $x^* = \vect{X^*}$, where $\Sigma^*$ denotes the set of all minimizers of problem \cref{eq:GOT-x}.

At the end of this section, let us make some conventions. The angle bracket $\dual{\cdot,\cdot}$ stands for the usual Euclidean inner product of two vectors.
For any SPD matrix $A$, define the $A$-inner product $\dual{\cdot,\cdot}_A:= \dual{A\cdot,\cdot}$ and the induced $A$-norm $\nm{\cdot}_{A} = \sqrt{\dual{\cdot,\cdot}_A}$. When no confusion arises, the $A$-norm of a matrix $B$ is denoted by $\nm{B}_A:=\sup_{x\neq 0}\nm{Bx}_A/\nm{x}_A$. 
The proximal mapping $\prox_{\eta g}:\R^n\to\R^n$ of a properly closed convex function $g$ with $\eta>0$ is
\[
\prox_{\eta g}(x): = \mathop{\argmin}_{y\in\R^n}\left\{g(y) + \frac{1}{2\eta}\nm{y-x}^2\right\}\quad\forall\, x\in\R^n.
\]
Let $\delta_{\mathcal O}(\cdot)$ be the indicator function of a nonempty closed convex subset $\mathcal O\subset \R^n$ and define the normal cone at $x$ as follows
\[
\mathcal N_{\mathcal O}(x):=\left\{z\in\R^n:\dual{z,x-y}\geq 0\quad\forall\,y\in\mathcal O\right\}.
\]
For simplicity, we also write $\proj_{\mathcal O}=\prox_{\eta\delta_\mathcal O}$ for all $\eta>0$.

\section{An Inexact SsN-based Primal-Dual Method}
\label{sec:SsN-PD}
For any $\lambda\in\R^{m+n+r}$ and $\bm u=(x,y,z)\in\R^{mn}\times\R^n\times\R^m$, define the Lagrangian function for \cref{eq:GOT-x}:
\begin{equation}\label{eq:L2}
	\mathcal L(\bm u,\lambda) : = h(x)+\delta_{\Sigma}(\bm u)+\dual{\lambda,H\bm u-b},
\end{equation}
where $H = (G,I_Y,I_Z)$.
Notice that $\mathcal L(\cdot,\lambda)$ is convex and we set $	\partial_{\bm u}\mathcal L(\bm u,\lambda): = \nabla h(\bm u)+\mathcal N_{\Sigma}(\bm u)+ H^\top\lambda$,
where $	\nabla h(\bm u) = \sigma (x,{\bf 0}_n,{\bf 0}_m)-\widetilde{c}$ with $\widetilde{c} = (\sigma \phi-c,{\bf 0}_{n},{\bf 0}_m)$. 
\subsection{An inexact primal-dual algorithm}
\label{sec:PDM}
Let us start from the accelerated primal-dual flow dynamics proposed in \cite{luo_accelerated_pd_2021}:
\begin{equation}\label{eq:apd}
	\left\{
	\begin{aligned}
		\beta\lambda'={}&\nabla_\lambda \mathcal{L}(\bm v,\lambda),\\
		\bm u'={}&\bm v-\bm u,\\
		\beta \bm v' \in{}&-\partial_{\bm u}\mathcal L(\bm u,\lambda),
	\end{aligned}
	\right.
\end{equation}
where $\beta(t) = e^{-t}$ is a built-in time rescaling factor.
For the smooth case, i.e., $\Sigma$ is the entire space, we have exponential decay property (cf. \cite[Lemma 2.1]{luo_accelerated_pd_2021})
\[
\mathcal{L}(\bm u(t),\lambda^*)-\mathcal{L}(\bm u^*,\lambda(t))
+\snm{h(x(t))-h(x^*)}+\nm{H\bm u(t)-b}\leq C e^{-t}.
\]
In this work, we leave the well-posedness and exponential decay of the differential inclusion \cref{eq:apd} alone but focus on its implicit Euler discretization.

More precisely, given the current iteration
$(\bm u_k,\bm v_k,\lambda_k)$, compute
$(\bm u_{k+1},\bm v_{k+1},\lambda_{k+1})$ by that
\begin{subnumcases}{	\label{eq:apd-im-x-im-l}}
	\beta_k \frac{\lambda_{k+1}-\lambda_k}{\alpha_k} = {} \nabla_\lambda \mathcal L(\bm v_{k+1},\lambda_{k+1}),\label{eq:apd-im-x-im-l-l}\\
	\frac{\bm u_{k+1}-\bm u_{k}}{\alpha_k}={}\bm v_{k+1}-\bm u_{k+1},\label{eq:apd-im-x-im-l-x}\\
	\beta_k \frac{\bm v_{k+1}-\bm v_k}{\alpha_k} \in -\partial_{\bm u}\mathcal L\left(\bm u_{k+1}, \lambda_{k+1}\right),
	\label{eq:apd-im-x-im-l-v}
\end{subnumcases}
where $\alpha_k>0$ denotes the step size and the parameter sequence $\{\beta_k\}_{k\in\mathbb N}$ is updated by 
\begin{equation}\label{eq:thetak}
	\beta_{k+1}-\beta_{k} = -\alpha_k\beta_{k+1},\quad \beta_0=1.
\end{equation}
By \eqref{eq:apd-im-x-im-l-x}, we replace $\bm v_{k+1}$ by $\bm u_{k+1}$ and then put it into  \eqref{eq:apd-im-x-im-l-l} and \eqref{eq:apd-im-x-im-l-v} to obtain
\begin{subnumcases}{	\label{eq:lk1-uk1}}
	\label{eq:lk1}
	\beta_{k+1}\lambda_{k+1}={}H\bm u_{k+1}+\widetilde{\lambda}_k,\\
	\label{eq:uk1}
	D_{k}\bm u_{k+1}\in  \bm w_{k} - H^\top\lambda_{k+1}-\mathcal N_{\Sigma}(\bm u_{k+1}),
\end{subnumcases}
where $	D_{k} ={\rm diag}(\eta_{k}I_{mn},\tau_{k}I_n,\tau_{k}I_m)$ and 
\[
\left\{
\begin{aligned}
	\bm w_k={}&  \widetilde{c}+\beta_{k}(\bm u_k+\alpha_k\bm v_k)/\alpha_k^2,\\
	\eta_{k} ={}&\sigma+\tau_{k},\,\tau_{k} = \beta_k(1+\alpha_k)/\alpha_k^2,\\
	\widetilde{\lambda}_k={}&\beta_{k+1}\left[\lambda_k-\beta_{k}^{-1}(H\bm u_k- b)\right]-b.
\end{aligned}
\right.
\]

From \eqref{eq:uk1} we have $	\bm u_{k+1} ={} \proj_{\Sigma}\!\left(D_{k}^{-1}(\bm w_k-
H^\top\lambda_{k+1})\right)$. Plugging  this into \eqref{eq:lk1} gives a nonlinear equation
\begin{equation}\label{eq:prox-lk1}
	F_k(\lambda_{k+1}) = 0,
\end{equation}
where the mapping $F_k:\R^{m+n+r}\to\R^{m+n+r}$ is defined by
\begin{equation}\label{eq:Fk}
	F_k(\lambda): =	\beta_{k+1}\lambda-H\proj_{\Sigma}\left(D_{k}^{-1}(\bm w_k-
	H^\top\lambda)\right)-\widetilde{\lambda}_k\quad\forall\,\lambda\in\R^{m+n+r}.
\end{equation}
It is well-known that $\proj_{\Sigma}$ is monotone and $1$-Lipschitz continuous (cf. \cite[Proposition 12.27]{Bauschke2011}).
In \cref{sec:SsN}, we will see that \cref{eq:prox-lk1} is nothing but the Euler--Lagrange equation for minimizing a smooth and strongly convex objective (see \cref{eq:cal-Fk}). Therefore it admits a unique solution which is denoted by $\lambda_{k+1}^{\#}$ (instead of $\lambda_{k+1}$), and we obtain 
\begin{equation}\label{eq:uk1-true}
	\bm u_{k+1}^{\#} ={} \proj_{\Sigma}\!\left(D_{k}^{-1}(\bm w_k-
	H^\top\lambda_{k+1}^{\#})\right),\quad 	\bm v_{k+1}^{\#} = \bm u_{k+1}^{\#}+\frac{\bm u_{k+1}^{\#}-\bm u_{k}}{\alpha_k}.
\end{equation}
This means $(\bm u_{k+1}^{\#},\bm v_{k+1}^{\#},\lambda_{k+1}^{\#})$ is the exact solution to the implicit scheme \cref{eq:apd-im-x-im-l} at the $k$-th step.

In practical computation, however, the inner problem \cref{eq:prox-lk1} is often solved approximately. 
Below, an inexact version of \cref{eq:apd-im-x-im-l} is summarized in \cref{algo:Inexact-PD}. Then in \cref{sec:conv,sec:SsN}, we will present the convergence analysis and apply the semi-smooth Newton iteration (\cref{algo:SsN}) to solve the nonlinear equation \cref{eq:prox-lk1}. 
\begin{algorithm}[H]
	\caption{Inexact Primal Dual Method for \cref{eq:GOT-x}}
	\label{algo:Inexact-PD}
	\begin{algorithmic}[1] 
		\REQUIRE  $\beta_{0}=1,\,\bm v_0,\,\bm u_0 \in\R^{mn}\times\R^n\times \R^m$ and $ \lambda_0\in\R^{m+n+r}$.
		\FOR{$k=0,1,\ldots$}
		\STATE Choose the step size $\alpha_k>0$ and the tolerance $\epsilon_k>0$.
		\STATE Set $	\tau_{k} ={} \beta_k(1+\alpha_k)/\alpha_k^2$ and $\eta_{k} =\sigma+\tau_{k}$.
		\STATE Set $D_{k} ={\rm diag}(\eta_{k}I_{mn},\tau_{k}I_n,\tau_{k}I_m)$ and $\bm w_k={} \widetilde{c}+\beta_{k}(\bm u_k+\alpha_k\bm v_k)/\alpha_k^2$.		
		\STATE Update $\displaystyle \beta_{k+1}= \beta_k/(1+\alpha_k)$ and set $	\widetilde{\lambda}_k={}\beta_{k+1}\left[\lambda_k-\beta_{k}^{-1}(H\bm u_k- b)\right]-b$.
		\STATE Apply \cref{algo:SsN} to \cref{eq:prox-lk1} to obtain $\lambda_{k+1}$ such that 
		$\|\lambda_{k+1}-\lambda_{k+1}^{\#}\|\leq 
		\epsilon_k$.\label{line:lk1}
		\STATE Update $	\bm u_{k+1} = \proj_{\Sigma}\left(D_{k}^{-1}(\bm w_k-
		H^\top\lambda_{k+1})\right)$.
		\STATE Update $	\bm v_{k+1} = \bm u_{k+1}+(\bm u_{k+1}-\bm u_k)/\alpha_k$.		
		\ENDFOR
	\end{algorithmic}
\end{algorithm}

According to \cref{thm:conv-Inexact-PD}, the convergence rate is related to the step size $\alpha_k$ and we provide detailed discussions in \cref{rem:ak-linear,rem:ak-sublinear}. In addition, the stop criterion in step
 \ref{line:lk1} of \cref{algo:Inexact-PD} is convenient for the upcoming convergence rate proof but not practical as it requires the true solution $\lambda^{\#}_{k+1}$. In numerical experiments, we focus on the quantity  $\nm{F_k(\lambda_{k+1})}$, which provides a computable posterior indicator.
\subsection{Rate of convergence}
\label{sec:conv}
Let $\{(\beta_{k+1},\bm u_{k+1},\bm v_{k+1},\lambda_{k+1})\}_{k\in\mathbb N}$ be generated by \cref{algo:Inexact-PD} with $\{\alpha_k\}_{k\in\mathbb N}$ and $\{\epsilon_k\}_{k\in\mathbb N}$. It is clear that $\{\bm u_{k}\}_{k\in\mathbb N}\subset \Sigma$. Following \cite{luo_accelerated_pd_2021,luo_unified_2021,luo_primal-dual_2022}, introduce a discrete Lyapunov function
\begin{equation}
	\label{eq:Ek}
	\mathcal E(\beta_k,\bm u_k,\bm v_k,\lambda_k): = \mathcal L(\bm u_k,\lambda^*)-\mathcal L(\bm u^*,\lambda_k)
	+\frac{\beta_k}{2}\left(\nm{\bm v_k-\bm u^*}^2+\nm{\lambda_k-\lambda^*}^2\right),
\end{equation}
and for simplicity, we write $	\mathcal E_k=	\mathcal E(\beta_k,\bm u_k,\bm v_k,\lambda_k)$.

Let $k\in\mathbb N,\,(\beta_k,\bm u_k,\lambda_k)$ and $(\alpha_k,\epsilon_k)$ be given. By \cite[Theorem 3.1]{luo_accelerated_pd_2021}, we have the one-step estimate
\begin{equation}\label{eq:diff-Ek-}
	\mathcal E^{\#}_{k+1}-	\mathcal E_{k}
	\leq -\alpha_k		\mathcal E^{\#}_{k+1},
\end{equation}
where $	\mathcal E^{\#}_{k+1}: =\mathcal E(\beta_{k+1},\bm u_{k+1}^{\#},\bm v_{k+1}^{\#},\lambda_{k+1}^{\#})$ and $(\bm u_{k+1}^{\#},\bm v_{k+1}^{\#},\lambda_{k+1}^{\#})$ is the exact solution to the implicit Euler discretization \cref{eq:apd-im-x-im-l} at the $k$-th iteration. This also implies that
\begin{equation}\label{eq:diff-Ek1-Ek}
	\mathcal E_{k+1}-\frac{\mathcal E_{k}}{1+\alpha_k}  ={}\mathcal E_{k+1}-\mathcal E_{k+1}^{\#}+\mathcal E_{k+1}^{\#}-\frac{\mathcal E_{k}}{1+\alpha_k} 
	\leq {}\mathcal E_{k+1}-\mathcal E_{k+1}^{\#},
\end{equation}
which leads to the following one-iteration estimate.

\begin{lem}\label{lem:bd-Ek1}
	Let $(\beta_{k+1},\bm u_{k+1},\bm v_k,\lambda_{k+1})$ be the output of the $k$-th iteration of \cref{algo:Inexact-PD} with $(\beta_k,\bm u_k,\lambda_k)$ and $(\alpha_k,\epsilon_k)$. Then we have
	\begin{equation}\label{eq:bd-Ek1}\small
		\begin{aligned}
			\mathcal{E}_{k+1}\leq {}&\frac{\mathcal{E}_k}{1+\alpha_k}
			+\epsilon_k\beta^{-1}_{k}\alpha_k^2\nm{H}\left(\sigma\nm{x^*}+\nm{H^\top\lambda^*-\widetilde{c}}\right)\\
			{}&\quad
			+\epsilon_k\beta_k^{-1}
			\left(\sigma \alpha_k^2\nm{H}\nm{x_{k+1}-x^*}
			+\alpha_k\beta_k\nm{H}\nm{\bm v_{k+1}-\bm u^*}
			+\beta_{k}^2\nm{\lambda_{k+1}-\lambda^*}
			\right).
		\end{aligned}
	\end{equation}
\end{lem}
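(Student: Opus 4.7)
The plan is to start from the telescoping inequality \eqref{eq:diff-Ek1-Ek}, which already reduces the task to controlling the single quantity $\mathcal E_{k+1}-\mathcal E_{k+1}^{\#}$. Expanding the definition of the Lyapunov function \eqref{eq:Ek}, and using that $H\bm u^*=b$ so the $\lambda$-linear term vanishes, I would split
\begin{equation*}
\mathcal E_{k+1}-\mathcal E_{k+1}^{\#}
= \underbrace{h(x_{k+1})-h(x_{k+1}^{\#})+\dual{\lambda^*,H(\bm u_{k+1}-\bm u_{k+1}^{\#})}}_{\text{(I)}}
+\underbrace{\tfrac{\beta_{k+1}}{2}\bigl(\nm{\bm v_{k+1}-\bm u^*}^2-\nm{\bm v_{k+1}^{\#}-\bm u^*}^2\bigr)}_{\text{(II)}}
+\underbrace{\tfrac{\beta_{k+1}}{2}\bigl(\nm{\lambda_{k+1}-\lambda^*}^2-\nm{\lambda_{k+1}^{\#}-\lambda^*}^2\bigr)}_{\text{(III)}},
\end{equation*}
and bound each piece separately against the tolerance $\epsilon_k$ through the three basic Lipschitz-type estimates $\nm{\lambda_{k+1}-\lambda_{k+1}^{\#}}\leq\epsilon_k$, $\nm{\bm u_{k+1}-\bm u_{k+1}^{\#}}\leq \tau_k^{-1}\nm{H}\epsilon_k$ (from $1$-Lipschitzness of $\proj_\Sigma$ and $\nm{D_k^{-1}}=\tau_k^{-1}$), and $\bm v_{k+1}-\bm v_{k+1}^{\#}=\tfrac{1+\alpha_k}{\alpha_k}(\bm u_{k+1}-\bm u_{k+1}^{\#})$, which follows directly from the update rule in \cref{algo:Inexact-PD}.

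For (I), since $\bm u_{k+1},\bm u_{k+1}^{\#}\in\Sigma$ the indicator terms cancel, and convexity of $h$ together with $\nabla h(x)=\sigma x-\widetilde{c}|_x$ gives
\begin{equation*}
\text{(I)}\leq \dual{\nabla h(x_{k+1})+G^\top\lambda^*,\,x_{k+1}-x_{k+1}^{\#}}+\dual{(H^\top\lambda^*-\widetilde c)|_{y,z},\,\bm u_{k+1}-\bm u_{k+1}^{\#}}
= \sigma\dual{x_{k+1}-x^*,\,x_{k+1}-x_{k+1}^{\#}}+\sigma\dual{x^*,\,x_{k+1}-x_{k+1}^{\#}}+\dual{H^\top\lambda^*-\widetilde c,\,\bm u_{k+1}-\bm u_{k+1}^{\#}}.
\end{equation*}
Cauchy--Schwarz plus the estimate $\nm{x_{k+1}-x_{k+1}^{\#}}\leq\nm{\bm u_{k+1}-\bm u_{k+1}^{\#}}\leq \tau_k^{-1}\nm{H}\epsilon_k$ then yields the first line of \eqref{eq:bd-Ek1} together with the $\sigma\nm{x_{k+1}-x^*}$ contribution, after invoking $\tau_k^{-1}=\alpha_k^2/(\beta_k(1+\alpha_k))\leq \alpha_k^2/\beta_k$.

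For (II) and (III), I would use the identity $\nm{a}^2-\nm{b}^2=2\dual{a,a-b}-\nm{a-b}^2\leq 2\nm{a}\nm{a-b}$ with $a=\bm v_{k+1}-\bm u^*,\,b=\bm v_{k+1}^{\#}-\bm u^*$ (resp. for $\lambda$). For (III) the direct tolerance bound gives $\text{(III)}\leq \beta_{k+1}\epsilon_k\nm{\lambda_{k+1}-\lambda^*}\leq\beta_k\epsilon_k\nm{\lambda_{k+1}-\lambda^*}$, which matches the last term of \eqref{eq:bd-Ek1} after multiplying and dividing by $\beta_k$. For (II), the key computation is
\begin{equation*}
\beta_{k+1}\nm{\bm v_{k+1}-\bm v_{k+1}^{\#}}\leq \frac{\beta_k}{1+\alpha_k}\cdot\frac{1+\alpha_k}{\alpha_k}\cdot\frac{\nm H\epsilon_k}{\tau_k}=\frac{\alpha_k}{1+\alpha_k}\nm H\epsilon_k\leq\alpha_k\nm H\epsilon_k,
\end{equation*}
which yields exactly the $\bm v$-term in \eqref{eq:bd-Ek1}.

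The only delicate point is the bookkeeping: three different reciprocal scales $1/\tau_k$, $\beta_{k+1}$, and $(1+\alpha_k)/\alpha_k$ must all be expressed in the common form $\epsilon_k\beta_k^{-1}\{\alpha_k^2,\alpha_k\beta_k,\beta_k^2\}$ dictated by \eqref{eq:bd-Ek1}, and this requires repeatedly substituting $\tau_k=\beta_k(1+\alpha_k)/\alpha_k^2$ and $\beta_{k+1}=\beta_k/(1+\alpha_k)$ from \eqref{eq:thetak}. Once this is done, summing the bounds on (I), (II), (III) and adding $\mathcal E_k/(1+\alpha_k)$ via \eqref{eq:diff-Ek1-Ek} gives exactly \eqref{eq:bd-Ek1}.
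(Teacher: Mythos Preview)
Your proposal is correct and follows essentially the same route as the paper: starting from \eqref{eq:diff-Ek1-Ek}, splitting $\mathcal E_{k+1}-\mathcal E_{k+1}^{\#}$ into the Lagrangian piece and the two quadratic pieces, and bounding each via the Lipschitz estimates $\nm{\lambda_{k+1}-\lambda_{k+1}^{\#}}\leq\epsilon_k$, $\nm{\bm u_{k+1}-\bm u_{k+1}^{\#}}\leq \tau_k^{-1}\nm{H}\epsilon_k$, and $\bm v_{k+1}-\bm v_{k+1}^{\#}=\tfrac{1+\alpha_k}{\alpha_k}(\bm u_{k+1}-\bm u_{k+1}^{\#})$, together with the identity $\nm{a}^2-\nm{b}^2\leq 2\nm{a}\nm{a-b}$. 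The only cosmetic difference is that for (I) you invoke convexity of $h$ whereas the paper writes out the exact quadratic identity $h(x_{k+1})-h(x_{k+1}^{\#})=\sigma\dual{x_{k+1},x_{k+1}-x_{k+1}^{\#}}-\dual{\widetilde c,\bm u_{k+1}-\bm u_{k+1}^{\#}}-\tfrac{\sigma}{2}\nm{x_{k+1}-x_{k+1}^{\#}}^2$ and then drops the last (negative) term; both yield the same upper bound, and the paper likewise concludes with the triangle inequality $\nm{x_{k+1}}\leq\nm{x^*}+\nm{x_{k+1}-x^*}$, which is your splitting $\sigma\dual{x_{k+1},\cdot}=\sigma\dual{x_{k+1}-x^*,\cdot}+\sigma\dual{x^*,\cdot}$ in disguise.
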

\begin{proof}	
	Thanks to \cref{eq:diff-Ek1-Ek}, it is sufficient to focus on the difference
	\begin{equation}\label{eq:diff-E2}\small
		\begin{aligned}
			\mathcal E_{k+1}-\mathcal E_{k+1}^{\#}={}&\mathcal L(\bm u_{k+1},\lambda^*)-\mathcal L(\bm u^{\#}_{k+1},\lambda^*)+\frac{\beta_{k+1}}{2}\big(\nm{\bm v_{k+1}-\bm u^*}^2
			-\|\bm v^{\#}_{k+1}-\bm u^*\|^2\big)\\
			&\qquad +\frac{\beta_{k+1}}{2}\big( \nm{\lambda_{k+1}-\lambda^*}^2-\|\lambda^{\#}_{k+1}-\lambda^*\|^2\big).
		\end{aligned}
	\end{equation}
	Since $\proj_{\Sigma}$ is 1-Lipschitz continuous and
	\[\begin{aligned}
		\bm u_{k+1} ={}\proj_{\Sigma}\left(D_{k}^{-1}(\bm w_k-
		H^\top\lambda_{k+1})\right)
		,\quad
		\bm u^{\#}_{k+1} ={}&\proj_{\Sigma}\big(D_{k}^{-1}(\bm w_k-
		H^\top\lambda_{k+1}^\#)\big),
	\end{aligned}
	\]
	it follows from the fact $\nm{D_k^{-1}}\leq 1/\tau_{k}$ that
	\begin{equation}\label{eq:uk1-est}
		||\bm u_{k+1}-\bm u^{\#}_{k+1}||\leq \frac{1}{\tau_k} ||H^\top(\lambda_{k+1}-\lambda^{\#}_{k+1})||\leq \frac{\epsilon_k\beta^{-1}_{k}\alpha_k^2}{1+\alpha_k}\nm{H}.
	\end{equation}
	By \cref{eq:uk1-true}, we have $	\bm v_{k+1}^\# = \bm u_{k+1}^\#+(\bm u_{k+1}^\#-\bm u_k)/\alpha_k$, which together with the update for $\bm v_{k+1}$ in \cref{algo:Inexact-PD} yields the identity $\bm v_{k+1}-\bm v^{\#}_{k+1}=(1+1/\alpha_k)(\bm u_{k+1}-\bm u^{\#}_{k+1})$. Hence it holds that
	\[
	||\bm v_{k+1}-\bm v^{\#}_{k+1}||\leq \frac{\alpha_k+1}{\alpha_k}||\bm u_{k+1}-\bm u^{\#}_{k+1}||\leq \epsilon_k\beta^{-1}_{k}\alpha_k\nm{H},
	\]
	which gives
	\[\begin{aligned}
		\nm{\bm v_{k+1}-\bm u^*}^2
		-\|\bm v^{\#}_{k+1}-\bm u^*\|^2=		{}&2\langle \bm v_{k+1}-\bm u^*,\bm v_{k+1}-\bm v^{\#}_{k+1}\rangle-\|\bm v_{k+1}-\bm v^{\#}_{k+1}\|^2\\
		\leq{}&
		2\epsilon_k\beta^{-1}_{k}\alpha_k\nm{H}\nm{\bm v_{k+1}-\bm u^*}.
	\end{aligned}
	\]
	Similarly, we have
	\[\begin{aligned}
		{}& \nm{\lambda_{k+1}-\lambda^*}^2-\|\lambda^{\#}_{k+1}-\lambda^*\|^2\leq 
		2\epsilon_k\nm{\lambda_{k+1}-\lambda^*}.
	\end{aligned}
	\]
	Plugging the above two estimates into \cref{eq:diff-E2} implies
	\begin{equation}\label{eq:diff-Ek1}
		\mathcal E_{k+1}-\mathcal E_{k+1}^{\#}\leq{}\mathcal L(\bm u_{k+1},\lambda^*)-\mathcal L(\bm u^{\#}_{k+1},\lambda^*)+\epsilon_k\alpha_k\nm{H}\nm{\bm v_{k+1}-\bm u^*}+\epsilon_k\beta_{k}\nm{\lambda_{k+1}-\lambda^*},
	\end{equation}
	where we used the relation $\beta_{k+1}\leq \beta_k$.
	
	To the end, let us estimate the first difference term in \cref{eq:diff-Ek1} as follows. It is clear that 
	\[
	h(x_{k+1})-h(x_{k+1}^{\#})=\sigma\langle x_{k+1}-x^{\#}_{k+1}, x_{k+1}\rangle
	- \langle \widetilde{c},\bm u_{k+1}-\bm u^{\#}_{k+1}\rangle-\frac{\sigma}{2}\|x_{k+1}-x^{\#}_{k+1}\|^2.
	\]
	Invoking \cref{eq:uk1-est} and the fact $\bm u^{\#}_{k+1},\,\bm u_{k+1}\in \Sigma$, we find 
	\[
	\begin{aligned}
		\mathcal L(\bm u_{k+1},\lambda^*)-\mathcal L(\bm u^{\#}_{k+1},\lambda^*) ={}& h(x_{k+1})-h(x_{k+1}^{\#}) + \langle H^\top\lambda^*,\bm u_{k+1}-\bm u^{\#}_{k+1}\rangle\\
		\leq {}&\left(\sigma\nm{x_{k+1}}+\nm{H^\top\lambda^*-\widetilde{c}}\right)\|\bm u_{k+1}-\bm u^{\#}_{k+1}\|\\
		\leq{}& \epsilon_k\beta^{-1}_{k}\alpha_k^2\nm{H}
		\left(\sigma\nm{x_{k+1}}+\nm{H^\top\lambda^*-\widetilde{c}}\right).
	\end{aligned}
	\]
Combining this with \cref{eq:diff-Ek1} and
	the triangle inequality $\nm{x_{k+1}}\leq \nm{x^*}+\nm{x_{k+1}-x^*}$, we obtain \cref{eq:bd-Ek1} and complete the proof of this lemma.
\end{proof}

From \cref{eq:thetak} we obtain $\beta_{k} = \prod_{i=0}^{k-1}\frac{1}{1+\alpha_i}$. To derive the concrete convergence rate of \cref{algo:Inexact-PD},
let us introduce 
\[
\widehat{\varepsilon}_k := \sum_{i=0}^{k-1}\epsilon_i\alpha_i^2\beta_{i}^{-2}
,\quad 
\widetilde{\varepsilon}_k :=\sum_{i=0}^{k-1}\epsilon_i\beta_i^{-3/2}(\alpha_i^2+\alpha_i\sqrt{\beta_i}+\beta_{i}^{3/2}),\quad k\geq 1,
\]
and for $k=0$, set $\widehat{\varepsilon}_0 = \widetilde{\varepsilon}_0 = 0$. 
\begin{thm}\label{thm:conv-Inexact-PD}
	Let $\{\bm u_k\}_{k\in\mathbb N}=\{(x_k,y_k,z_k)\}_{k\in\mathbb N}\subset\Sigma$ and $\{\lambda_k\}_{k\in\mathbb N}$ be generated by \cref{algo:Inexact-PD} with arbitrary step size sequence $\{\alpha_k\}_{k\in\mathbb N}$ and tolerance sequence $\{\epsilon_k\}_{k\in\mathbb N}$. Then  for all $k\in\mathbb N$, there holds that
	\begin{equation}\label{eq:rate}
\begin{aligned}
	{}&		\mathcal L(\bm u_{k},\lambda^*)-\mathcal L(\bm u^*,\lambda_k) +	
	\nm{H\bm u_k-b}+	\snm{h(x_k)-h(x^*)}\\
	\leq {}&
	\left( C_1(\sqrt{\widehat{\varepsilon}_k})+ C_2(\widetilde{\varepsilon}_k)\right)
	\times \prod_{i=0}^{k-1}\frac{1}{1+\alpha_i},
\end{aligned}
	\end{equation}
	where both $C_1(\cdot)$ and $C_2(\cdot)$ are quadratic functions.
\end{thm}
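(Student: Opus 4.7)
The plan is to turn the one-step bound \cref{eq:bd-Ek1} into a multi-step estimate for the normalized quantity $\mathcal E_k/\beta_k$, by dividing \cref{eq:bd-Ek1} by $\beta_{k+1}=\beta_k/(1+\alpha_k)$ so that the contraction factor $1/(1+\alpha_k)$ becomes a genuine telescoping cancellation $\beta_k/\beta_{k+1}-1=\alpha_k$. The main complication is that the right-hand side of \cref{eq:bd-Ek1} still depends on the iterate norms $\nm{x_{k+1}-x^*}$, $\nm{\bm v_{k+1}-\bm u^*}$, $\nm{\lambda_{k+1}-\lambda^*}$, so any resulting bound on $\mathcal E_{k+1}$ is only implicit.

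First I would exploit the saddle-point inequality $\mathcal L(\bm u_k,\lambda^*)\geq\mathcal L(\bm u^*,\lambda^*)\geq\mathcal L(\bm u^*,\lambda_k)$ to extract the one-sided lower bound
\[
\mathcal E_k\geq\frac{\beta_k}{2}\left(\nm{\bm v_k-\bm u^*}^2+\nm{\lambda_k-\lambda^*}^2\right),
\]
so that $\nm{\bm v_k-\bm u^*}+\nm{\lambda_k-\lambda^*}\leq 2\sqrt{\mathcal E_k/\beta_k}$. For the factor $\sigma\nm{x_{k+1}-x^*}$ appearing in \cref{eq:bd-Ek1}, when $\sigma>0$ strong convexity of $h$ in $x$ gives $\mathcal L(\bm u_{k+1},\lambda^*)-\mathcal L(\bm u^*,\lambda^*)\geq(\sigma/2)\nm{x_{k+1}-x^*}^2$, again controlled by $\mathcal E_{k+1}/\beta_{k+1}$; when $\sigma=0$ the corresponding term vanishes on its own. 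Substituting these estimates into \cref{eq:bd-Ek1} and using $\beta_k/\beta_{k+1}=1+\alpha_k$, the recurrence takes the self-referential form
\[
M_{k+1}\leq M_k+A_k\sqrt{M_{k+1}}+B_k,\qquad M_k:=\mathcal E_k/\beta_k,
\]
whose coefficients $A_k$ and $B_k$, summed over $i=0,\ldots,k-1$, combine into exactly the accumulators $\widetilde\varepsilon_k$ and $\widehat\varepsilon_k$ defined just before the theorem, up to constants depending only on $\nm{H}$, $\sigma\nm{x^*}$ and $\nm{H^\top\lambda^*-\widetilde c}$.

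Next I would iterate this recurrence via a discrete Gronwall-type argument for square-root inequalities. Introducing $N_k:=\max_{0\leq j\leq k}\sqrt{M_j}$, telescoping and taking the running maximum yield $N_k^2\leq M_0+c_1\widehat\varepsilon_k+c_2\widetilde\varepsilon_k\,N_k$, which is quadratic in $N_k$ and can be solved to get $\sqrt{M_k}\leq P_1(\sqrt{\widehat\varepsilon_k})+P_2(\widetilde\varepsilon_k)$ with affine functions $P_1,P_2$. Squaring gives $\mathcal E_k\leq\beta_k(\widetilde C_1(\sqrt{\widehat\varepsilon_k})+\widetilde C_2(\widetilde\varepsilon_k))$ for quadratic $\widetilde C_1,\widetilde C_2$ whose coefficients depend only on the saddle point, on $\nm{H}$ and on $\mathcal E_0$. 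Since $\beta_k=\prod_{i=0}^{k-1}1/(1+\alpha_i)$, the announced decay factor is then read off directly.

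Finally, I would convert the Lyapunov bound into the three quantities on the left-hand side of \cref{eq:rate}. The primal-dual gap is immediate since $\mathcal L(\bm u_k,\lambda^*)-\mathcal L(\bm u^*,\lambda_k)\leq\mathcal E_k$. For the feasibility residual and the objective residual separately I would use the identity $\mathcal L(\bm u_k,\lambda^*)-\mathcal L(\bm u^*,\lambda_k)=[h(x_k)-h(x^*)]+\dual{\lambda^*,H\bm u_k-b}-\dual{\lambda_k-\lambda^*,H\bm u^*-b}$ together with $H\bm u^*=b$, and the standard trick of probing the Lyapunov function with a shifted dual variable $\bar\lambda:=\lambda^*+\rho(H\bm u_k-b)/\nm{H\bm u_k-b}$ to extract $\nm{H\bm u_k-b}$ explicitly; the $\snm{h(x_k)-h(x^*)}$ part then follows by reinserting the feasibility bound. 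The main obstacle I anticipate is the second step: the appearance of $\sqrt{M_{k+1}}$ on the right forces the square-root Gronwall argument and ultimately explains why $C_1(\cdot)$ and $C_2(\cdot)$ must be quadratic, not linear, in the error accumulators. A secondary bookkeeping task is to check that the weights $\alpha_i^2\beta_i^{-2}$ and $\beta_i^{-3/2}(\alpha_i^2+\alpha_i\sqrt{\beta_i}+\beta_i^{3/2})$ emerging from the computation match the defining sums of $\widehat\varepsilon_k$ and $\widetilde\varepsilon_k$ verbatim.
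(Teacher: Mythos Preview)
Your Steps 1--4 (dividing \cref{eq:bd-Ek1} by $\beta_{k+1}$, feeding back $\|\bm v_{k+1}-\bm u^*\|$, $\|\lambda_{k+1}-\lambda^*\|$ and $\sigma\|x_{k+1}-x^*\|$ in terms of $\sqrt{\mathcal E_{k+1}}$, then running a square-root Gronwall argument on $M_k=\mathcal E_k/\beta_k$) coincide with the paper's route; the paper outsources the Gronwall step to \cite[Lemma 3.3]{luo_accelerated_2021} and arrives at $\mathcal E_k\le\beta_k R_k^2$ with $R_k=\sqrt{\mathcal E_0+Z\widehat\varepsilon_k}+\sqrt 2\,Q\widetilde\varepsilon_k$.

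The divergence is in your Step 5. To isolate $\|H\bm u_k-b\|$ the paper does \emph{not} probe with a shifted dual $\bar\lambda$; instead it exploits the exact dual update \eqref{eq:apd-im-x-im-l-l}, which yields the telescoping identity
\[
\lambda_{k+1}^{\#}-\lambda_k=\beta_{k+1}^{-1}(H\bm u_{k+1}^{\#}-b)-\beta_k^{-1}(H\bm u_k-b),
\]
so that summing gives $\beta_k^{-1}(H\bm u_k-b)=(\lambda_k-\lambda_0)+(H\bm u_0-b)-\sum_{i<k}E_i$ with perturbation $E_i=(\lambda_{i+1}-\lambda_{i+1}^{\#})+\beta_{i+1}^{-1}H(\bm u_{i+1}^{\#}-\bm u_{i+1})$ and $\|E_i\|\le\epsilon_i(1+\alpha_i^2\beta_i^{-2}\|H\|^2)$ via \cref{eq:uk1-est}. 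Since $\|\lambda_k-\lambda^*\|\le\sqrt 2\,R_k$ is already in hand, this produces $\|H\bm u_k-b\|$ directly, after which $|h(x_k)-h(x^*)|$ follows from $0\le h(x_k)-h(x^*)+\langle\lambda^*,H\bm u_k-b\rangle\le\mathcal E_k$. Your shifted-dual trick is a legitimate alternative in principle, but it presupposes that the one-step contraction \cref{eq:diff-Ek-} and Lemma~\ref{lem:bd-Ek1} remain valid when $\lambda^*$ is replaced by every $\bar\lambda$ in a fixed ball around $\lambda^*$; that uniformity is plausible for schemes of this type but is neither asserted nor proved in the present framework, whereas the telescoping argument needs nothing beyond what has already been established.
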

\begin{proof}
	Based on \cref{eq:bd-Ek1} and the proof of \cite[Lemma 3.3]{luo_accelerated_2021}, we are ready to establish 		
	\begin{equation}\label{eq:Ekbound2}
		\mathcal L(\bm u_{k},\lambda^*)-\mathcal L(\bm u^*,\lambda_k) \leq 			\mathcal{E}_k\leq 
		\beta_k\left(\sqrt{\mathcal{E}_0+Z\widehat{\varepsilon}_k}+\sqrt{2}Q \widetilde{\varepsilon}_k\right)^2,
	\end{equation}
	where $	Z := {}\nm{H}\left(\sigma\nm{x^*}+\nm{H^\top\lambda^*-\widetilde{c}}\right)$ and $	Q: = {} 1+(1+\sqrt{\sigma})\nm{H}$.
	Since $\bm u_k\in\Sigma$, we have
	\[
	0\leq h(x_k)-h(x^*)+\dual{\lambda^*,H\bm u_k-b} = \mathcal L(\bm u_k,\lambda^*)-\mathcal L(\bm u^*,\lambda_k)\leq \mathcal E_k\leq \beta_k R_k^2,
	\]
	where $R_k: = \sqrt{\mathcal{E}_0+Z\widehat{\varepsilon}_k}+\sqrt{2}Q \widetilde{\varepsilon}_k$, and it follows immediately that
	\begin{equation}
		\label{eq:est-hxk}
		{}	\snm{h(x_k)-h(x^*)}\leq \beta_kR_k^2+\nm{\lambda^*}		\nm{H\bm u_k-b},
	\end{equation}
	
	Below, we aim to prove
	\begin{equation}\label{eq:est-Huk-b}
		{}	\nm{H\bm u_k-b}\leq \beta_k\Big(\nm{H\bm u_0-b}+\nm{\lambda_0+\lambda^*}
		+\sqrt{2}R_k+\nm{H}^2\widehat{\varepsilon}_k+\widetilde{\varepsilon}_k\Big),
	\end{equation}
	which together with \cref{eq:Ekbound2,eq:est-hxk} proves \cref{eq:rate}. Note that $(\bm u_{k+1}^{\#},\bm v_{k+1}^{\#},\lambda_{k+1}^{\#})$ is the exact solution to the implicit Euler discretization \cref{eq:apd-im-x-im-l} at the $k$-th iteration and by \eqref{eq:apd-im-x-im-l-l} we have
	\[
	\lambda_{k+1}^{\#} - \lambda_k = \alpha_k/\beta_k(H\bm v_{k+1}^{\#}-b)= \beta_{k+1}^{-1}(H\bm u_{k+1}^{\#}-b) -\beta_{k}^{-1}\left(H\bm u_{k}-b\right).
	\]
	Therefore, a rearrangement gives
	\[
	\lambda_{k+1} - \lambda_k =  E_k+
	\beta_{k+1}^{-1}\left(H\bm u_{k+1}-b\right) -\beta_{k}^{-1}\left(H\bm u_{k}-b\right),
	\]
	where $E_k:=\lambda_{k+1}-\lambda_{k+1}^{\#} +
	\beta_{k+1}^{-1}H(\bm u_{k+1}^{\#}-\bm u_{k+1})$. This also leads to
	\[
	\lambda_{k}-\lambda_0=\beta_{k}^{-1}\left(H\bm u_{k}-b\right)-(H\bm u_{0}-b)+\sum_{i=0}^{k-1}E_i,
	\]
	and we get
	\[
	\nm{H\bm u_k-b}\leq \beta_k\left(\nm{H\bm u_0-b}+\nm{\lambda_k-\lambda_0}+ \sum_{i=0}^{k-1}\nm{E_i}\right).
	\]
	Invoking \cref{eq:uk1-est} implies
	\[
	\nm{E_k}\leq \|\lambda_{k+1}-\lambda_{k+1}^{\#} \|
	+\beta_{k+1}^{-1}\nm{H}\|\bm u_{k+1}^{\#}-\bm u_{k+1}\|
	\leq \epsilon_k\big(1+\alpha_k^2\beta_k^{-2}\nm{H}^2\big),
	\]
	and using the estimate \cref{eq:Ekbound2} promises that $\nm{\lambda_k-\lambda^*}\leq \sqrt{2}R_k$. Consequently, we obtain \cref{eq:est-Huk-b} and finish the proof of this theorem.
\end{proof}

According to \cref{eq:rate}, the final rate is obtained as long as the step size $\alpha_k$ and the error $\epsilon_k$ are specified. Two examples are given in order.
\begin{rem}\label{rem:ak-linear}
	Consider non-vanishing step size $\alpha_k\geq \widehat{\alpha}>0$. If $\epsilon_k = \mathcal O(\beta_k^{3/2}/(k+1)^{p})$ with $p>1$, then $\widetilde{\varepsilon}_k<\infty$ and $\sqrt{\beta_k}\widehat{\varepsilon}_k<\infty$. By \cref{eq:rate} and the fact that both $C_1(\cdot)$ and $C_2(\cdot)$ are quadratic functions, we obtain the final rate 
	\[
	\beta_k	\left( C_1(\sqrt{\widehat{\varepsilon}_k})+ C_2(\widetilde{\varepsilon}_k)\right) \leq C_3(\alpha_{k}^{\max})\sqrt{\beta_k},
	\]
	where $\alpha_{k}^{\max} = \max_{0\leq i\leq k-1}\{\alpha_i\}$ and $C_3(\cdot)$ is a quartic function. Therefore, we have at least linear rate since $\beta_{k}\leq (1+\widehat{\alpha})^{-k}$, and superlinear convergence follows  provided that $\alpha_k\to\infty$.
\end{rem}
\begin{rem}\label{rem:ak-sublinear}
	We then consider vanishing step size $\alpha_k\to 0$. In particular, assume $\alpha_{k}^2 = (k+1)^p\beta_k^3\beta_{k+1}^{-2}$ with $p>0$, then an elementary calculation yields that $\beta_k = \mathcal O(1/(k+1)^{2+p})$ and $\alpha_k = \mathcal O(1/(k+1))$. Hence, if $\epsilon_k = \mathcal O(1/(k+1)^q)$ with $q>3+2p$, then $\widehat{\varepsilon}_k+\widetilde{\varepsilon}_k<\infty$ and we have the sublinear rate
	\[
	\beta_k	\left( C_1(\sqrt{\widehat{\varepsilon}_k})+ C_2(\widetilde{\varepsilon}_k)\right) = \mathcal O(1/(k+1)^{2+p}),\text{ with any } p>0.
	\]
\end{rem}
\subsection{An SsN method for the subproblem \cref{eq:prox-lk1}}
\label{sec:SsN}
For $\delta_{\Sigma}(\bm u) = \delta_{\mathcal X}(x)+\delta_{\mathcal Y}(y)+\delta_{\mathcal Z}(z)$, define its Moreau--Yosida approximation
\begin{equation}\label{eq:def-MY}
	[\delta_\Sigma](\bm u): = \min_{\bm v\in\Sigma}\,
	\frac{1}{2}\nm{\bm v-\bm u}_{D_k}^2\quad\forall\,\bm u\in\R^{mn}\times\R^n\times \R^m,
\end{equation}
and introduce $	\mathcal F_k:\R^{m+n+r}\to\R$ by that 
\begin{equation}\label{eq:cal-Fk}
	\mathcal F_k(\lambda) := {}
	\frac{\beta_{k+1}}{2}\nm{\lambda}^2-\langle \widetilde{\lambda}_k,\lambda\rangle
	+\frac{1}{2}\nm{\bm w_k-H^\top\lambda}_{D_k^{-1}}^2-[\delta_{\Sigma}]\left(D_{k}^{-1}(\bm w_k-
	H^\top\lambda)\right).
\end{equation}
Note that $\mathcal F_k$ is strongly convex and continuous differentiable with $\nabla \mathcal F_k = F_k$, where $F_k$ has been defined in \cref{eq:Fk}. Indeed, according to \cite[Proposition 12.29]{Bauschke2011}, $[\delta_\Sigma](\cdot)$ is continuous differentiable and $	\nabla[\delta_\Sigma](\bm u) = D_k(\bm u-\proj_{\Sigma}(\bm u))$.
Moreover, by Moreau's decomposition \cite[Theorem 14.3 (ii)]{Bauschke2011}
\[
\bm u=\proj_{\Sigma}(\bm u) + D_k^{-1}\prox_{D_k\delta_\Sigma^*}(D_k\bm u) ,
\]
we also find that 
\begin{equation}\label{eq:cal-Fk-new}
\begin{aligned}
		\mathcal F_k(\lambda) = {}&
	\frac{\beta_{k+1}}{2}\nm{\lambda}^2-\langle \widetilde{\lambda}_k,\lambda\rangle
	+	\delta_{\Sigma}^*\left(\prox_{D_k\delta_{\Sigma}^*}(\bm w_k-H^\top\lambda)\right)\\
	{}&\quad	+\frac{1}{2}\nm{\proj_{\Sigma}(D_{k}^{-1}(\bm w_k-
		H^\top\lambda))}_{D_k}^2,
\end{aligned}
\end{equation}
where $\delta_{\Sigma}^* $ is the conjugate function of $\delta_{\Sigma}$ and $D_k\delta_\Sigma^*$ is understood as $\eta_k\delta_{\mathcal X}^*+\tau_k(\delta_{\mathcal Y}^*+\delta_{\mathcal Z}^*)$.

Let $M = m+n$.
As $\Sigma=\{\bm u\in\R^{mn+M}:\sigma_{1,i}\leq \bm u_i\leq \sigma_{2,i}\}$ is a box region, $\proj_{\Sigma}$ is piecewise affine and strongly semismooth (cf. \cite[Propositions 4.1.4 and 7.4.7]{Facchinei2003-v2}), and so is $F_k$ (see \cite[Proposition 7.4.4]{Facchinei2003-v2}). 
Denote by $\partial \proj_{\Sigma}(\bm u) $ the Clarke subdifferential \cite[Definition 2.6.1]{clarke_optimization_1987} of the proximal mapping $\proj_{\Sigma} $ at $\bm u$. Thanks to \cite[Table 3]{lin_augmented_2022}, we have
\begin{equation}\label{eq:par-proj}
	\partial \proj_{\Sigma}(\bm u) :=\left\{
	{\rm diag}(\chi):
	\chi_i\in
	\left\{
	\begin{aligned}
		&\{1\}&&\text{if}\,\sigma_{1,i}< \bm u_i< \sigma_{2,i}\\
		&[0,1]&&\text{if }\bm u_i \in\{\sigma_{1,i},\,\sigma_{2,i}\}\text{ and }
		\sigma_{1,i}\neq\sigma_{2,i}\\
		&\{0\}&&\text{if }\bm u_i \leq\sigma_{1,i}\text{ or }\bm u_i \geq\sigma_{2,i}
	\end{aligned}
	\right.
	\right\}.
\end{equation}

For every $\lambda\in\R^{M+r}$, let $U_k(\lambda) \in \partial \proj_{\Sigma}\left(D^{-1}_{k}(\bm w_k-H^\top\lambda) \right) $ and define an SPD matrix
\begin{equation}\label{eq:Hk}
	\mathcal J_k(\lambda) := \beta_{k+1}I+ HD_k^{-1}U_k(\lambda)H^{\top}.
\end{equation}
Then the semi-smooth Newton (SsN) iteration for \cref{eq:prox-lk1} reads as follows
\begin{subnumcases}{	\label{eq:SsN}}
	\label{eq:SsN-dj}
	\mathcal J_k(\lambda^j)\xi= -F_k(\lambda^j),\\
	\label{eq:SsN-lj}	
	\lambda^{j+1} = \lambda^j +\xi.
\end{subnumcases}
Since $F_k$ is strongly semismooth, we have local quadratic convergence \cite{Qi1993,Qi1993a}. Below, the SsN method \cref{eq:SsN} is summarized in \cref{algo:SsN}, where a line search procedure \cite{dennis_numerical_1996} is supplemented for global convergence.
\begin{algorithm}[H]
	\caption{SsN method for \cref{eq:prox-lk1}}
	\label{algo:SsN}
	\begin{algorithmic}[1] 
		\REQUIRE $\tau\in(0,1/2),\,\delta\in(0,1)$ and $\lambda\in\R^{M+r}$.
		\FOR{$j=0,1,\ldots$}
		\STATE Set $\lambda_{\rm old} = \lambda$ and $\bm z_k= D^{-1}_{k}(\bm w_k-H^\top\lambda)$.
		\STATE Compute $U_k(\lambda) \in \partial \proj_{\Sigma}\left(\bm z_k\right)$ by \cref{eq:par-proj}.
		\STATE Solve the linear SPD system $	\mathcal J_k(\lambda)\xi= -F_k(\lambda)$.
		\STATE Find the smallest $\ell\in\mathbb N$ such that $					\mathcal F_k(\lambda_{\rm old}+\delta^\ell \xi)\leq \mathcal F_k(\lambda_{\rm old})+\tau\delta^\ell\dual{F_k(\lambda_{\rm old}),\xi}$.
		\STATE Update $\lambda = \lambda_{\rm old} + \delta^\ell \xi$.
		\ENDFOR
	\end{algorithmic}
\end{algorithm}
To update the SsN iteration, we have to solve a linear SPD system in \eqref{eq:SsN-dj}. In \cref{sec:near-singu-SPD}, we shall explore its hidden graph structure and obtain an equivalent graph Laplacian, for which an efficient and robust algebraic multigrid method will be proposed in \cref{sec:amg}. 

\section{An Equivalent Graph Laplacian System}
\label{sec:near-singu-SPD}
\subsection{The reduced problem}
Recall that $H = (G,I_Y,I_Z)$, where $G,\,I_Y$ and $I_Z$ are defined in \cref{eq:A-b}. 
Let us rewrite \eqref{eq:SsN-dj}  in a generic form
\begin{equation}\label{eq:Ge-b}
	\mathcal H \xi = (\epsilon I+	\mathcal H_0)\xi=z,
\end{equation} 
where 
\begin{equation}\label{eq:calH-A}
	\mathcal H_0
	=\begin{pmatrix}
		\diag{t}+T \diag{s} T^\top&T \diag{s} \Pi^\top\\
		\Pi  \diag{s}  T^\top&\Pi  \diag{s} \Pi^\top
	\end{pmatrix},
\end{equation}
with $s\in\R_+^{mn}$ and $t\in\R_+^{M}$. Let $S = \diag{s}$ and $K =\diag{t}$ and 
write $\xi = (\xi_1,\xi_2)$ and $z = (z_1,z_2)$, then \cref{eq:Ge-b} is equivalent to 
\[
\left\{
\begin{aligned}
	(\epsilon I+		K+TS T^\top)\xi_1+TS \Pi^\top \xi_2 = z_1,\\
	\Pi S  T^\top \xi_1+(\epsilon I+\Pi S \Pi^\top)\xi_2 = z_2.
\end{aligned}
\right.
\]
Additionally, this gives
\[
\left\{
\begin{aligned}
	\xi_1={}&\left(\mathcal T-\Psi\widetilde{\Pi}^{-1}\Psi^\top\right)^{-1} \left(z_1-\Psi\widetilde{\Pi}^{-1}z_2\right),\\
	\xi_2 ={}& \widetilde{\Pi}^{-1}\left(z_2- \Psi^\top \xi_1\right),
\end{aligned}
\right.
\]
where $\mathcal T =\epsilon I+		K+TS T^\top\in\R^{M\times M},\,\widetilde{\Pi}=\epsilon I+\Pi S \Pi^\top\in\R^{r\times r}$ and $\Psi=TS \Pi^\top\in\R^{M\times r}$. 

Assume $r$ is small, then $\widetilde{\Pi}$ is easy to invert. This is true for all transport-like problems listed in \cref{sec:prob-exam}. Indeed, for partial optimal transport \cref{eq:PDOT}, $\widetilde{\Pi}$ is a constant ($r=1$) and for other problems, $\widetilde{\Pi}$ is just a vacuum ($r=0$). 
Moreover, thanks to Sherman--Woodbury formula, we have
\[
\left(\mathcal T-\Psi\widetilde{\Pi}^{-1}\Psi^\top\right)^{-1}=
\mathcal T^{-1}+\mathcal T^{-1}\Psi\left( \widetilde{\Pi}-\Psi^\top \mathcal T^{-1} \Psi \right)^{-1}\Psi^\top\mathcal T^{-1}.
\]
Since $\widetilde{\Pi}-\Psi^\top \mathcal T^{-1} \Psi \in\R^{r\times r}$ is invertible with small size, what we shall pay attention to is the inverse of $	\mathcal T$, which corresponds to the reduced linear system
\begin{equation}\label{eq:He-b}
	\mathcal T\xi=(\epsilon I+	K+TS T^\top)\xi = z.
\end{equation}
\subsection{An equivalent graph Laplacian}
Let $Y\in\R^{m\times n}$ be such that $\vect{Y} = s$, then a direct computation yields 
\begin{equation}\label{eq:cal-H}
	\mathcal T_0:=TS T^\top=
	\begin{pmatrix}
		\diag{Y^{\top}{\bf 1}_m}&	Y^{\top}\\
		&\\
		Y	&\diag{Y{\bf 1}_n}
	\end{pmatrix}.
\end{equation}
Besides, set $\mathcal Q= \diag{I_n,-I_m}$ and define $\mathcal A_0 := \mathcal Q\mathcal T_0\mathcal Q\in\R^{M\times M}$, then $\mathcal T_0 $ is spectrally equivalent to $\mathcal A_0$ and a  direct calculation gives 
\[
\mathcal A_0=
\begin{pmatrix}
	{\rm diag}(Y^{\top}{\bf 1}_m)&-Y^{\top}\\
	-Y&	{\rm diag}(Y{\bf 1}_n)
\end{pmatrix}.
\]
Note that $\mathcal A_0$ is the Laplacian matrix of the bipartite graph $\mathcal G = (\mathcal V,\mathcal E,w)$, where 
$w = \vect{Y},\,\mathcal V = \mathcal V_1\cup \mathcal V_2$ with $\mathcal V_1 = \{1,2,\cdots,n\}$ and $\mathcal V_2 = m+\mathcal V_1$, and $\mathcal E = \{e=\{i,j\}:w_{(i-1)m+j-n}>0,\,i\in \mathcal V_1,\,j\in \mathcal V_2\}$.
Consequently the reduced linear system \cref{eq:He-b} now is equivalent to 
\begin{equation}\label{eq:equi-calA-eps}
	\mathcal A u=(\epsilon I +K+\mathcal A_0) u= \mathcal Qz,
\end{equation}
where $K =\diag{t}$ is diagonal with nonnegative components $t\in\R_+^M$. Clearly, if $u$ solves \cref{eq:equi-calA-eps}, then the solution to \cref{eq:He-b} is given by $\xi = \mathcal Qu$. 
\begin{rem}	
	We claim that the sparsity pattern of $s$ (and thus $Y$) is related to that of $\{x_k\}_{k\in\mathbb N}$. Recall that 
	\[
\begin{aligned}
		\bm u_{k+1} = {}&\proj_{\Sigma}\left(D_{k}^{-1}(\bm w_k-
	H^\top\lambda_{k+1})\right),\\
	x_{k+1} ={}&  \proj_{\mathcal X}\left(\eta_{k}^{-1}(w_k-
	G^\top\lambda_{k+1})\right),
\end{aligned}
	\]
where $w_k$ is the component of $\bm w_k$ in $\mathcal X$. In view of \cref{eq:Hk}, we have 
\[
s \in\partial \proj_{\mathcal X}\left(\eta_{k}^{-1}(w_k-
G^\top\lambda_{k+1})\right),
\]
and by \eqref{eq:uk1} and \cref{eq:par-proj}, we see that $s$ is very close to the sparsity pattern of $x_{k+1}$. Moreover, as $x_k$ converges to $x^*$ that corresponds to an optimal transport plan $X^*$, the sparsity pattern of $Y$ agrees with that of $X^*$. 	
\end{rem}
\subsection{A hybrid framework}
We now discuss how to solve the linear SPD system \cref{eq:equi-calA-eps}.
If the bipartite graph $\mathcal G$ of $\mathcal A_0$ has $\kappa$ connected components, then there is a permutation matrix $\mathcal P$ such that
\begin{equation}\label{eq:diag-cal-A}
	\mathcal P^\top\mathcal A_0\mathcal P= \diag{	A_0^1, A_0^2,\cdots, A_0^\kappa},
\end{equation}
where each $A_0^i(1\leq i\leq \kappa)$ corresponds to the Laplacian matrix of some connected bipartite graph. Since $\epsilon I+K$ is diagonal, we are allowed to solve $\kappa$ independent linear systems, each of which takes the form
\begin{equation}\label{eq:Auf}
	A u=(\epsilon I+\Lambda+A_0) u= f,
\end{equation}
where $\Lambda$ is diagonal with nonnegative components and $A_0\in\R^{N\times N}$ is a connected graph Laplacian, with explicit null space: ${\rm span}\{{\bf 1}_N\}$. Note that if the diagonal part of $A_0$ has zero component, then it can be further reduced. Thus, without lose of generality, in what follows, assume all diagonal elements of $A_0$ are positive, which means $A_0$ has no zero row or column since $A_0{\bf 1}_N = 0$.

Recall that the size of the linear system \cref{eq:equi-calA-eps} is $M$-by-$M$. Therefore, if $N\leq M^{1/3}$, then the solution to \cref{eq:Auf} can be obtained via direct method within $\mathcal O(N^3)\leq  \mathcal O(M)$ complexity. Otherwise, we shall consider iterative methods. This leads to a hybrid approach, as summarized in \cref{algo:Hybrid_Solver}.
\begin{algorithm}[H]
	\caption{A Hybrid Solver for \cref{eq:He-b}: $\mathcal T\xi = z$}
	\label{algo:Hybrid_Solver}
	\begin{algorithmic}[1] 
		\STATE Set $\mathcal Q= \diag{I_n,-I_m},\,\mathcal A_0 = \mathcal Q\mathcal T_0\mathcal Q$ and $\mathcal A = \epsilon I+K+\mathcal A_0$.
		\STATE Check the connected components of $\mathcal A_0 $ and find a permutation matrix $\mathcal P$ such that \[
		\mathcal P^\top\mathcal A\mathcal P= \diag{ A_1, A_2,\cdots,A_\kappa}\quad\text{and}\quad 
		f = \mathcal P^\top \mathcal Qz= \left( f_1, f_2,\cdots,f_\kappa\right),
		\]
		where $A_i\in\R^{n_i\times n_i}$ and $f_i\in\R^{n_i}$, for all $1\leq i\leq \kappa$.
		\STATE For small component $n_i\leq M^{1/3}$, invoke direct method (or PCG) to solve $A_iu_i = f_i$.
		\STATE For large component $n_i>M^{1/3}$, apply iterative solver to $A_iu_i = f_i$.	
		\STATE Recover the solution $\xi = \mathcal Qu$ with $\left( u_1, u_2,\cdots,u_\kappa\right)$.
	\end{algorithmic}
\end{algorithm}

If $\Lambda\neq O$, then $A$ is SPD for all $\epsilon\geq 0$. When $\Lambda$ vanishes, $A$ becomes nearly singular if $\epsilon$ is close to zero. This tricky issue increases the number of iterations of standard solvers like Jacobi iteration, Gauss-Seidel iteration, and PCG; see our numerical evidence in \cref{tab:amg-pcg-test1-even}, and we refer to \cite{lee_robust_2007} for detailed discussions on this. Moreover, standard iterative methods are not robust concerning the problem size as well, which motivates us to consider the algebraic multigrid (AMG) algorithm.
\section{Classical AMG Algorithm}
\label{sec:amg}
Multigrid methods are efficient iterative solvers or preconditioners for large sparse linear SPD systems arising from numerical discretizations of partial differential equations (PDEs) \cite{bramble_parallel_1990,chen_convergence_2020,chen_optimal_2012,hackbusch_multi-grid_2011,li_bpx_2016,wu_convergence_2012,xu_theory_1989,xu_new_1992,xu_two-grid_1996}. Those linear systems are always  ill-conditioned as the mesh size decreases (or equivalently the problem size increases), and standard stationary iterative solvers converge dramatically slowly. However, multigrid methods possess {\it mesh-independent} convergence rate and can achieve the {\it optimal} complexity. 

The basic multigrid ingredients are error smoothing and coarse grid correction. In the setting of PDE discretizations, the coarse grid is based on geometric mesh and a multilevel hierarchy can also be constructed easily. On the other hand, the multigrid idea has been applied to the case where no geometric mesh is available. In particular, for the graph Laplacian system \cref{eq:Auf}, multilevel hierarchy can still be obtained from the adjacency graph to $A$. Then different coarsening techniques and interpolations lead to various algebraic multigrid algorithms, such as classical AMG and aggregation-based AMG \cite{hutchison_algebraic_2006,briggs_multigrid_2000,xu_algebraic_2017}.
\subsection{Multilevel $W$-cycle}
\label{sec:amg-w}
Let us first present an abstract multilevel $W$-cycle framework for solving \cref{eq:Auf}. There are two steps: the setup phase and the iteration phase. 

In the setup phase, we work with a family of coarse spaces: $ \{V_\ell = \R^{N_\ell}\}_{\ell = 1}^J$, where $N_{J}<\cdots<N_{\ell}<\cdots N_1 = N$, and build some basic ingredients that include
\begin{itemize}
	\item Smoothers: $R_\ell:\R^{N_{\ell}}\to \R^{N_\ell}$ for all $1\leq \ell\leq J$;
	\item Prolongation matrices: $P_\ell:\R^{N_{\ell+1}}\to \R^{N_\ell}$ for $1\leq \ell\leq J-1$;
	\item Coarse level operators: $A_1 = A$ and $A_{\ell+1} = P_{\ell}^{\top}A_{\ell}P_\ell$ for all $1\leq \ell\leq J-1$.
\end{itemize}
The coarsest level size $N_J$ is very small and in practice, $N_J = \mathcal O(N^{1/3})$  is acceptable. The prolongation operators $\{P_\ell\}_{\ell=1}^{J-1}$ shall be injective, i.e., each $P_{\ell}\in\R^{N_\ell\times N_{\ell+1}}$ has full column rank. Moreover, since $A$ might be nearly singular, we require that $P_{\ell}{\bf 1}_{N_\ell+1} = {\bf 1}_{N_{\ell}}$, then $A_\ell{\bf 1}_{N_\ell}$ is close to zero for all $1\leq \ell\leq J$. 

In each level, the smoother $R_{\ell}\in \R^{N_\ell\times N_\ell}$ is an approximation to $A^{-1}_{\ell}$ and possesses smoothing property. For $\ell = J$, we can choose $R_\ell = A_\ell^{-1}$ or invoke the PCG iteration. For $1\leq \ell<J$, let $A_\ell= D_\ell+L_\ell+L^{\top}_\ell$ where $D_\ell$ is the diagonal part and $L_\ell$ is the strictly lower triangular part. Then we can consider
\begin{itemize}
	\item Gauss--Seidel: $R_\ell=(D_{\ell}+L_{\ell})^{-1}$ for $\ell = 1$;
	\item Weighted Jacobi: $R_\ell = \omega D^{-1}_{\ell}$ with $\omega\in(0,1)$ for $1\leq \ell\leq J$.
\end{itemize}
For $\ell=1$, thanks to the bipartite graph structure of $A_1=A$, the Gauss--Seidel smoother admits explicit expression. Given a smoother $R_\ell$, to handle the possibly nearly singular property of $A_\ell$, we follow \cite{lee_robust_2007,padiy_generalized_2001} and adopt a special one
\begin{equation}\label{eq:Ra}
	\widehat{R}_{\ell} =\frac{\xi_\ell\xi_\ell^\top}{\xi_\ell^\top A_\ell\xi_\ell}+R_\ell\left(I-A_\ell\frac{\xi_\ell\xi_\ell^\top}{\xi_\ell^\top A_\ell\xi_\ell}\right),
\end{equation}
where $\xi_\ell = {\bf 1}_{N_\ell}$ is the approximation kernel of $A_\ell$. 

Then in the iteration phase, we run the process
\begin{equation}\label{eq:AMG-W-iter}
	u_{k+1} = u_k + \mathtt{AMG}{\textrm -}\mathtt{W}(f-Au_k,0,1),\quad k = 0,1,\cdots,
\end{equation}
where $g = \mathtt{AMG}{\textrm -}\mathtt{W}(\zeta,e,\ell)$ is defined by \cref{algo:MG-W} in a recursive way. 
\begin{algorithm}[H]
	\caption{Algebraic Multigrid $W$-cycle: $g = \mathtt{AMG}{\textrm -}\mathtt{W}(\zeta,e,\ell)$}
	\label{algo:MG-W}
	\begin{algorithmic}[1] 
		\REQUIRE $\zeta,\,e\in \R^{N_\ell},\,1\leq \ell\leq J$ and $\theta\in\mathbb N_{\geq 1}$.
		\IF{$\ell =J$}
		\STATE $g = e + \widehat{R}_\ell  (\zeta-A_{\ell}e)$.
		\ELSE
		\FOR[Presmoothing]{$i=1,2,\cdots,\theta$}
		\STATE $e = e + \widehat{R}_\ell  (\zeta-A_{\ell}e)$.
		\ENDFOR
		\STATE Restriction: $\zeta_{\ell+1} = P^\top_{\ell}(\zeta-A_{\ell}e)$.
		\STATE Coarse correction: $e_{\ell+1}= \mathtt{AMG}{\textrm -}\mathtt{W}(\zeta_{\ell+1},0,\ell+1)$.		
		\STATE Coarse correction: $e_{\ell+1}= \mathtt{AMG}{\textrm -}\mathtt{W}(\zeta_{\ell+1},e_{\ell+1},\ell+1)$.		
		\STATE Prolongation: $e = e+P_{\ell}e_{\ell+1}$.
		\FOR[Postmoothing]{$i=1,2,\cdots,\theta$}
		\STATE $e = e + \widehat{R}^\top_\ell (\zeta-A_{\ell}e)$
		\ENDFOR
		\ENDIF
	\end{algorithmic}
\end{algorithm}
\begin{rem}\label{rem:level-J}
	We mention that the number of smoothing iterations $\theta\in\mathbb N_{\geq 1}$ in \cref{algo:MG-W} is fixed and in most cases a small choice, saying $\theta=5$, works well. In addition, the coarsening procedure, which will be introduced in the next section, leads to the reduction $N_{\ell+1} \approx N_\ell/2$, and thus the total number of levels is at most $J = \mathcal O(\ln N)$. Consequently, if 
	\begin{itemize}
		\item[(i)] the convergence rate $\rho^k$ of the AMG $W$-cycle \cref{eq:AMG-W-iter} is robust, which means $\rho\in(0,1)$ is independent of the singular parameter $\epsilon$ in $A$ and the 
		problem size $N$, and 
		\item[(ii)] the matrix-vector operations in each iteration of \cref{eq:AMG-W-iter} is $\mathcal O(\mathtt{nnz}(A))$, where $\mathtt{nnz}(A) $ denotes the number of nonzero elements of $A$,
	\end{itemize}
	then to achieve a given tolerance $\varepsilon$, the total computational work of the AMG $W$-cycle \cref{eq:AMG-W-iter} is optimal $\mathcal O(\mathtt{nnz}(A)|\ln\varepsilon|)$
	
	The convergence rate of the two-level case will be established later in \cref{sec:conv-2gd}, and the efficiency of the multilevel $W$-cycle shall be verified by numerical tests in \cref{sec:num-part1-amg}.
\end{rem}
\subsection{Coarsening and interpolation}
In this part, we shall construct the prolongation operators $\{P_\ell\}_{\ell=1}^{J-1}$. In the terminology of AMG, it can be done by {\it coarsening} and {\it interpolation} \cite{trottenberg_multigrid_2001,xu_algebraic_2017}. Here, ``interpolation'' means the operator $P_\ell:\R^{N_{\ell+1}}\to\R^{N_\ell}$ provides a good approximation from the coarse level $\R^{N_{\ell+1}}$ to the fine level $\R^{N_\ell}$. According to the hierarchy structure, it is sufficient to consider the case $\ell = 1$, which provides a template for coarse levels $1<\ell\leq J$.
\subsubsection{Maximal independent set}
In classical AMG, the coarsening is based on the so-called $\mathcal C\backslash 
\mathcal F$-splitting. Recall that $A_1=A$ and $N_1=N$.
Let $\mathcal V = \{1,2,\cdots,N\}$ and define the {\it strength} function $s_{\!A}:\mathcal V\times \mathcal V\to\R$ with respect to $A$ by that
\begin{equation}\label{eq:sij}
	s_{\!A}(i,j): = \frac{A_{ij}}{\max\{\min_{k\in \mathcal N(i)}A_{ik},\,\min_{k\in \mathcal N(j)}A_{jk}\}}\quad\forall\,(i,j)\in \mathcal V\times \mathcal V,
\end{equation}
where $\mathcal N(i): = \left\{j\in\mathcal V\backslash\{i\}:A_{ij}\neq 0\right\}$.
Given a threshold $\delta\in(0,1)$, we say $i\in\mathcal V$ and $j\in\mathcal V$ are strongly connected if $s_{\!A}(i,j)>\delta$. 
We aim to find a {\it maximal independent set} $\mathcal C = \{j_1,j_2,\cdots,j_{N_{2}}\}\subset\mathcal V$, such that any $i\in\mathcal C$ and $j\in\mathcal C$ are not strongly connected, i.e., $s_{\!A}(i,j)\leq \delta$. Then $\mathcal C$ stands for the collection of coarse nodes and its complement $\mathcal F = \mathcal V\backslash\mathcal C=\{i_1,i_2,\cdots,i_{N_{\rm f}}\}$ denotes the set of fine nodes, where $N_{\rm f} = N - N_{2}$. Notice that for any $i\in\mathcal F$, $\mathcal C\cap\mathcal N(i)$ is nonempty.

A basic splitting algorithm (cf. \cite[Algorithm 5]{xu_algebraic_2017}) has been described briefly in \cref{algo:C-F}. We refer to \cite[Appendix A.7]{trottenberg_multigrid_2001} for an variant, where a measure of importance has been introduced to obtain a reasonable distribution of coarse nodes.
\begin{algorithm}[H]
	\caption{$\mathcal C\backslash 
		\mathcal F$-splitting}
	\label{algo:C-F}
	\begin{algorithmic}[1] 
		\STATE Set the threshold $\delta\in(0,1)$.
		\STATE Initialize $\mathcal C = \emptyset$ and $\mathcal F = \emptyset$.
		\STATE Mark all nodes in $\mathcal V$ as unvisited: $\mathcal U(i) = true$ for all $i\in\mathcal V$.
		\FOR{$i = 1,2,\cdots,N$}
		\IF[$i$ has not been visited]{$\mathcal U(i)=true$}
		\STATE $\mathcal N_{\rm s}(i)=\{j\in\mathcal V:s_{\!A}(i,j)>\delta\}$.
		\STATE $\mathcal C = \mathcal C\cup\{i\}$ and $\mathcal F = \mathcal F\cup\mathcal N_{\rm s}(i)$.
		\STATE $\mathcal U(i) =false$ and $\mathcal U(k) = false$ for all $k\in\mathcal N_{\rm s}(i)$.
		\ENDIF
		\ENDFOR
	\end{algorithmic}
\end{algorithm}
\subsubsection{Interpolation operator}
\label{sec:iterp}
Once the $\mathcal C\backslash\mathcal F$-splitting has been done, 
we can find a permutation matrix $\Xi$ such that
\begin{equation}\label{eq:Xi}
	\Xi^\top A\Xi = \begin{pmatrix}
		A_{FF}&A_{FC}\\
		A_{FC}^\top&A_{CC}
	\end{pmatrix}.
\end{equation}
Then, we can choose (see \cite[Section 12.3]{xu_algebraic_2017})
\begin{itemize}
	\item {\it Ideal interpolation}: $P= \Xi \begin{pmatrix}
		W\\ I
	\end{pmatrix}$ with $W  = 		-A_{FF}^{-1}A_{FC}$.
	\item {\it Standard interpolation}: $P=\Xi \begin{pmatrix}
		(I-D_{FF}^{-1}A_{FF})	W\\ I
	\end{pmatrix}$ with $D_{FF}$ being the diagonal part of $A_{FF}$.
\end{itemize}

In addition, to satisfy $P{\bf 1}_{N_{2}} = {\bf 1}_{N_1}$, we need a scaling transform $P_1= \diag{P{\bf 1}_{N_{2}}}\backslash P$, which leads to the desired prolongation operator from level $\ell = 2$ to level $\ell =1$. 

Observing the particular structure of the system \cref{eq:Auf}, where $A_0$ is the Laplacian of some connected bipartite graph, we find
\[
A = \begin{pmatrix}
	A_{FF}&A_{FC}\\
	A_{FC}^\top&A_{CC}
\end{pmatrix},
\]
with $A_{FF} $ and $A_{CC} $ being diagonal. This yields an approximate $ \mathcal C\backslash\mathcal F$-splitting
\[
\mathcal F = \{1,2,\cdots,n_{\rm f}\}\quad\text{and}\quad \mathcal C = n_{\rm f}+\{1,2,\cdots,n_{\rm c}\},
\]
where $n_{\rm f}+n_{\rm c} = N$. Note that $\mathcal C$ might not be a maximal independent set but provides an approximate ideal interpolation. 

However, for $\ell>1$, it is not realistic to expect the bipartite structure of $A_\ell$, and to avoid inverting $A_{FF}$, we shall consider standard interpolation instead.

\section{Convergence Analysis}
\label{sec:conv-2gd}
Given $\zeta_{\ell},\,e_\ell\in\R^{N_\ell}$ and $1\leq\ell\leq J$, let $g_\ell = (I-B_{\ell}A_\ell)e_\ell+B_\ell\zeta_{\ell}$ be the output of \cref{algo:MG-W}, then $B_{J}: = \widehat{R}_J$, and by induction, for $1\leq \ell<J$, we have the recurrence relation
\begin{equation}\label{eq:I-BlAl}
	I-B_{\ell}A_\ell = 
	\left(I-\widehat{R}_\ell^\top A_\ell\right)^{\theta}(I-P_\ell B_{\ell+1}^\top P_\ell^\top A_\ell)
	(I-P_\ell B_{\ell+1}P_\ell^\top A_\ell)
	\left(I-\widehat{R}_\ell A_\ell\right)^{\theta}.
\end{equation}
Correspondingly, one finds that \cref{eq:AMG-W-iter} becomes
\begin{equation}\label{eq:uk-B1}
	u_{k+1} =  u_k +B_1( f- A u_k),\quad k = 0,1,\cdots,
\end{equation}
which yields 
\[
\nm{u_{k+1}-u^*}_A\leq \nm{I-B_1A}_{\!A} \nm{u_{k}-u^*}_A,
\]
with $u^*=A^{-1}f$ being the exact solution.

In the following, we aim to establish the estimate of $\nm{I-B_1A}_{\!A}$, by using the well-known Xu--Zikatanov identity \cite{xu_method_2002}. The main result is summarized in \cref{thm:conv-2g-AMG}, which says that the convergence rate of the two level case is independent of the singular parameter $\epsilon$ and the size $N$.
\begin{thm}
	\label{thm:conv-2g-AMG}
	Assume that $R_1$ is the Gauss--Seidel smoother (cf. \cref{sec:amg-w}) and $P_1$ is the ideal interpolation (cf. \cref{sec:iterp}). If $\bar B_2: = B_2^\top+B_2-B_2^\top A_2B_2$ is SPD, then $B_1$ is SPD and
	\begin{equation}\label{eq:conv-2gd}
		\nm{I-B_1A}_{\!A} =  1-\frac{1}{c_1},\quad c_1\leq 1+C +\frac{1}{1-	\nm{I-\bar B_2A_2}_{A_2}},
	\end{equation}
	where $C>0$ is independent of $\epsilon$ and $N$.
\end{thm}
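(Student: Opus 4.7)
The plan is to apply the Xu--Zikatanov identity \cite{xu_method_2002} to the error propagation operator of the two-level $W$-cycle. First, unfolding \cref{eq:I-BlAl} at level $\ell=1$ with $J=2$, the two recursive coarse corrections can be collapsed using $A_1^\top = A_1$, $P_1^\top A_1 P_1 = A_2$, and the definition of $\bar B_2$:
\[
(I-P_1 B_2^\top P_1^\top A_1)(I-P_1 B_2 P_1^\top A_1) = I - P_1 \bar B_2 P_1^\top A_1,
\]
so that $I-B_1 A_1 = (I-\widehat R_1^\top A_1)^\theta(I-P_1 \bar B_2 P_1^\top A_1)(I-\widehat R_1 A_1)^\theta$. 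Since $\bar B_2$ is SPD by hypothesis and the symmetrization of $\widehat R_1$ defined through \cref{eq:Ra} is SPD, a standard multiplicative-Schwarz argument gives that $B_1$ itself is SPD.

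Next, I would invoke the Xu--Zikatanov identity applied to the two-subspace decomposition $V_1 = V_s + {\rm range}(P_1)$ equipped with the subspace solvers $\widehat R_1$ and $\bar B_2$. Writing $\bar R_1$ for the symmetrization of the presmoothing sweep $(I-\widehat R_1 A_1)^\theta$, the identity gives
\[
\nm{I-B_1 A_1}_{A_1}^2 = 1 - \frac{1}{c_1},
\]
where $c_1$ is the smallest constant such that every $v\in V_1$ admits a splitting $v = v_s + P_1 v_c$ with $\nm{v_s}_{\bar R_1^{-1}}^2 + \nm{v_c}_{\bar B_2^{-1}}^2 \leq c_1 \nm{v}_{A_1}^2$. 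The use of ideal interpolation $P_1 = \Xi\binom{W}{I}$ with $W = -A_{FF}^{-1}A_{FC}$ is decisive here: the operator $Q_c := P_1 A_2^{-1} P_1^\top A_1$ is exactly the $A_1$-orthogonal projection onto ${\rm range}(P_1)$. I would therefore choose the canonical splitting $v_c = A_2^{-1}P_1^\top A_1 v$ and $v_s = (I-Q_c)v$, which yields $\nm{v_c}_{A_2}^2 \leq \nm{v}_{A_1}^2$ for free and reduces the analysis to (i) bounding $v_s$ in $\nm{\cdot}_{\bar R_1^{-1}}$ and (ii) converting $\nm{\cdot}_{A_2}$ to $\nm{\cdot}_{\bar B_2^{-1}}$ on the coarse space.

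For step (i), ideal interpolation forces $\Xi^\top v_s$ to have zero $\mathcal C$-block, so $v_s$ is supported on $\mathcal F$ and $\nm{v_s}_{A_1}^2 = \dual{A_{FF}w_F,w_F}$ with $w_F$ the $\mathcal F$-block. Using the standard smoothing estimate $\bar R_1^{-1}\lesssim D_1$ for symmetric Gauss--Seidel together with the kernel correction \cref{eq:Ra}, and the uniform diagonal dominance of $A_{FF}$ inherited from the strength threshold in \cref{eq:sij}, I would deduce $\nm{v_s}_{\bar R_1^{-1}}^2 \leq C \nm{v}_{A_1}^2$ with $C$ independent of $\epsilon$ and $N$. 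For step (ii), a Kantorovich-type perturbation argument on the coarse space yields $\nm{v_c}_{\bar B_2^{-1}}^2 \leq \nm{v_c}_{A_2}^2/(1-\nm{I-\bar B_2 A_2}_{A_2})$, producing exactly the claimed factor $1/(1-\nm{I-\bar B_2 A_2}_{A_2})$. Summing the two contributions gives $c_1 \leq 1 + C + 1/(1 - \nm{I-\bar B_2 A_2}_{A_2})$.

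The main obstacle is step (i): establishing the smoother bound with a constant $C$ uniform in both the near-singular parameter $\epsilon$ and the problem size $N$. As $\epsilon \to 0$, $A_1$ approaches a singular graph Laplacian with null space ${\rm span}\{{\bf 1}_N\}$, and ordinary Gauss--Seidel would degrade along this direction; the kernel correction \cref{eq:Ra} projecting out ${\bf 1}_N$ is precisely what restores uniform smoothing. The remaining delicate point is to certify that the approximate $\mathcal C\backslash\mathcal F$-splitting induced by the bipartite block structure of $A_1$ (mentioned at the end of \cref{sec:iterp}) produces an $A_{FF}$ whose effective condition depends only on the strength threshold $\delta$ and not on $N$; this reliance on the bipartite graph Laplacian structure is the most technical ingredient of the argument.
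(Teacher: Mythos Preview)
Your overall strategy---X--Z identity, ideal interpolation forcing $v_s\in W_{\rm f}$, a smoothing bound of Gauss--Seidel type, and the coarse estimate $\nm{v_c}_{\bar B_2^{-1}}^2\leq \nm{v_c}_{A_2}^2/(1-\nm{I-\bar B_2 A_2}_{A_2})$---matches the paper, and your Galerkin splitting $v_c=A_2^{-1}P_1^\top A_1 v$ is in fact identical to the paper's choice $v_c=(I-P_{\rm f})v$, since for ideal interpolation ${\rm range}(P_1)$ is exactly the $A$-orthogonal complement of $W_{\rm f}$.

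The one genuine structural difference is how the kernel-corrected smoother $\widehat R_1$ is handled. You keep $\widehat R_1$ intact and invoke the X--Z identity as a \emph{two}-subspace method with solvers $(\widehat R_1,\bar B_2)$; this obliges you to control $\nm{v_s}_{\bar R_1^{-1}}$ where $\bar R_1$ is the symmetrization of $(I-\widehat R_1 A)^\theta$, and you concede this is the delicate step. The paper instead unfolds $I-\widehat R_1 A=(I-R_1 A)(I-P_{\rm n})$ and treats the scheme as a \emph{three}-subspace multiplicative method on $V_{\rm n}\times V\times V_{\rm c}$ with solvers $(\xi\xi^\top/\eta,\,R_1,\,B_2)$ (\cref{lem:2g-XZ}). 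The kernel direction is then peeled off exactly in \cref{lem:chi1}, reducing the estimate to $v_{\rm r}\in V_{\rm r}$, after which only the \emph{plain} Gauss--Seidel symmetrization $\bar R$ appears; its bound $\bar R^{-1}\lesssim D$ (\cref{lem:smooth}) is standard and manifestly independent of $\epsilon$. In short, the paper's three-subspace viewpoint converts your ``main obstacle'' into a triviality, at the cost of a slightly more elaborate X--Z setup.

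Two minor corrections. First, the X--Z identity here yields $\nm{I-B_1A}_A=1-1/c_1$, not the squared norm. Second, the $\mathcal C\backslash\mathcal F$-splitting used in the analysis is the strength-based one from \cref{algo:C-F} with threshold $\delta$ (this is what drives the bound $\nm{v_{\rm f}}_D^2\leq(1+C_s/\delta)\nm{v_{\rm f}}_A^2$ in \cref{lem:approx}); the bipartite-block splitting at the end of \cref{sec:iterp} is a practical heuristic and not the one analyzed.
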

\begin{rem}
	Rigorously speaking, \cref{thm:conv-2g-AMG} does not provide final rate of the multilevel $W$-cycle \cref{eq:AMG-W-iter}, since the upper bound involves $\nm{I-\bar B_2A_2}_{A_2}$, which is related to the coarse level solver. The ideal case $\bar B_2 = A_2^{-1}$ implies $\nm{I-\bar B_2A_2}_{A_2}=0$, and thus $c_1\leq 2+C$. For the multilevel hierarchy, the estimate \cref{eq:conv-2gd} provides essential evidence to show that the AMG $W$-cycle can be robust provided that the coarse level solver  works well.
\end{rem}
\subsection{A robust estimate}
For ease of notation, we set $P= P_1$ and $N_{\rm c} = N_2$. Besides, let $V = \R^N$ and $\bm X = \R\times \R^{N}\times  \R^{N_{\rm c}}$, and define 
$	\Vc: = {\rm span}\left\{v_{\rm c} = Px:x\in\R^{N_{\rm c}}\right\}$ and $\Vn := {}{\rm span}\{\xi\}$ with $ \xi = {\bf 1}_N$.

Introduce $\bar B$ by that 
\begin{equation}\label{eq:barB}
	I-\bar BA = 
	(I-\widehat R_1^\top A)
	(I-PB_2^\top P^\top A)(I-PB_2P^\top A)(I-\widehat R_1A).
\end{equation}
Then by \cref{eq:I-BlAl}, we have 
\begin{equation}\label{eq:B1tobarB}
	I-B_1A = (I-\widehat{R}_1^\top A)^{\theta-1}\left(I-\bar BA\right)	(I-\widehat{R}_1 A)^{\theta-1}.
\end{equation}
The following lemma says that we only need to focus on $\nm{I-\bar BA}_A$, which corresponds to the simple case $\theta = 1$.
\begin{lem}
	\label{lem:RtoR1}
	If $\bar R=R_1^\top+R_1-R_1^\top AR_1$ is SPD, then 
	\[
	\|I-\widehat R_1A\|_A<1 \quad\text{and}\quad	\|I-\widehat R_1^\top A\|_A<1.
	\]
	Moreover, we have $\nm{I-B_1A}_{\!A} < \|I-\bar BA\|_{\!A} $.
\end{lem}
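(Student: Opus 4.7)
The plan is to reduce the first assertion to a clean multiplicative factorization of the smoothing error propagator. Setting $\xi = \mathbf 1_N$ and $Q = \xi\xi^\top / (\xi^\top A \xi)$, one reads off from \cref{eq:Ra} that $\widehat R_1 = Q + R_1(I - AQ)$, so a direct expansion yields
\[
I - \widehat R_1 A = (I - R_1 A)(I - \pi), \qquad \pi := QA.
\]
A short calculation (using that $\xi^\top A \xi$ is a scalar and $\pi u = \xi (\xi^\top A u)/(\xi^\top A \xi)$) shows $\pi^2 = \pi$, $A \pi = \pi^\top A$, and $\langle (I-\pi)u,\pi u\rangle_A = 0$, so $\pi$ is the $A$-orthogonal projection onto $\Vn$. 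In particular $\|I - \pi\|_A \leq 1$. Taking the $A$-adjoint gives the mirror identity $I - \widehat R_1^\top A = (I-\pi)(I - R_1^\top A)$, and consequently $\|I-\widehat R_1 A\|_A = \|I - \widehat R_1^\top A\|_A$ since the two operators are $A$-adjoints.

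Next I would establish the classical smoother contraction $\|I - R_1 A\|_A < 1$ from the $\bar R$ hypothesis. Expanding and using $A^\top = A$,
\[
\|(I - R_1 A) u\|_A^2 = \|u\|_A^2 - \langle (R_1^\top + R_1 - R_1^\top A R_1) A u, A u\rangle = \|u\|_A^2 - \langle \bar R A u, A u\rangle.
\]
Because $A$ is SPD and hence invertible, $A u \neq 0$ whenever $u \neq 0$, and the SPD assumption on $\bar R$ then forces $\langle \bar R A u, A u\rangle > 0$ on the unit $A$-sphere. Finite dimensionality lets the supremum be attained, so $\|I - R_1 A\|_A < 1$ strictly. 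Combining this with $\|I-\pi\|_A\leq 1$ and submultiplicativity of the $A$-norm yields $\|I - \widehat R_1 A\|_A < 1$, and the adjoint identity above transfers this bound to $\widehat R_1^\top$.

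For the second assertion I would simply feed these smoothing contractions into the recurrence \cref{eq:B1tobarB}:
\[
\|I - B_1 A\|_A \leq \|I - \widehat R_1^\top A\|_A^{\theta-1}\,\|I - \bar B A\|_A\,\|I - \widehat R_1 A\|_A^{\theta-1}.
\]
Whenever $\theta \geq 2$, the smoothing factors are strictly smaller than $1$ (and $\|I - \bar B A\|_A > 0$ unless the method is exact), so the bound is strict. The borderline case $\theta = 1$ reduces to $B_1 = \bar B$ and the inequality holds as equality, consistent with the reduction the lemma is designed to justify.

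The main obstacle I anticipate is the algebraic care needed to verify that $\pi = QA$ is a genuine $A$-orthogonal projection despite the fact that $QA \neq AQ$, and to keep track of which identity is being used (direct factorization versus $A$-adjoint) when arguing for $\widehat R_1^\top$. Once these are in place the rest is routine submultiplicativity, with the only subtlety being the edge case $\theta = 1$ noted above.
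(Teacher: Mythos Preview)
Your argument is correct and mirrors the paper's: both factor $I - \widehat R_1 A = (I - R_1 A)(I - P_{\rm n})$ with $P_{\rm n}$ the $A$-orthogonal projector onto $\mathrm{span}\{\xi\}$, bound the factors separately, and then invoke \cref{eq:B1tobarB}. Your direct computation $\|(I-R_1A)u\|_A^2 = \|u\|_A^2 - \langle \bar R Au, Au\rangle$ is a self-contained alternative to the paper's citation for the smoother contraction, and your caveat about $\theta = 1$ (where $B_1 = \bar B$ and the final bound degenerates to equality) is a correct refinement that the paper glosses over.
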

\begin{proof}
	According to the proof of \cite[Theorem 5, page 23]{xu_multilevel_2017}, we know that
	\begin{equation}\label{eq:barRA}
		\nm{I-\bar RA}_A=	\nm{I-R_1A}_A^2<1.
	\end{equation}
	Let $\widetilde R=R_1R_1^{-\top}\bar R  R_1^{-1}R_1^\top=R_1+R_1^\top-R_1AR_1^\top $. Note that $\widetilde{R}$ is SPD and 
	\[
	\|I-\widetilde RA\|_A=	\nm{I-R_1^\top A}_A^2<1.
	\]
	By \cref{eq:Ra}, we have $I-\widehat{R}_1A=(I-R_1A)(I-P_{\rm n})$, where $P_{\rm n} :V\to \Vn$ denotes the orthogonal projection operator with respect to the $A$-inner product, i.e., 
	\[
	P_{\rm n}= \xi(\xi^\top A\xi)^{-1}\xi^\top A = \frac{1}{\eta}\xi\xi^\top A,\quad \eta = \xi^\top A\xi>0.
	\]
	It follows from \cref{eq:barRA} that 
	\[
	\|I-\widehat R_1A\|_A
	\leq \nm{I-R_1A}_A\nm{I-P_{\rm n}}_A	\leq \nm{I-R_1A}_A<1.
	\]
	Similarly, we have $I-\widehat{R}^\top_1A=(I-P_{\rm n})(I-R_1^\top A)$ and thus $	\|I-\widehat R_1^\top A\|_A<1$.
	
	Hence, by \cref{eq:B1tobarB}, we obtain $\nm{I-B_1A}_{\!A} <\|I-\bar BA\|_{\!A} $ and conclude the proof.
\end{proof}

The notation $\bar R=R_1^\top+R_1-R_1^\top AR_1$ will be used in the sequel. For simplicity, for any $\bm x\in\bm X$, we write it as $\bm x = (x_1,x_2,x_3)$ with $x_1\in \R,\,x_2\in \R^{N}$ and $x_3\in \R^{N_{\rm c}}$, and $\varPi = (\xi,I,P)$ shall be understood as a linear mapping from $\bm X$ to $ V$ in the sense that $\varPi\bm x  := x_1\xi+x_2+Px_3\in V$.

One can consult many existing works \cite{huang_deriving_2011,xu_method_2002} on the proof of the following X-Z 
identity in more general abstract settings, and we refer to \cite[Theorem 22, page 64]{xu_multilevel_2017} for a comprehensive study 
\begin{lem}[X-Z identity]
	\label{lem:2g-XZ}
	Assume $P$ has full column rank and both $\bar R$ and $\bar B_2$ are SPD, then $\bar B$ is SPD and
	\[
	\nm{I-\bar BA}_{\!A} = 1-\frac{1}{c_0},\quad 	c_0 := \sup_{v\in V,\,\nm{v}_{\!A}=1}\inf_{\bm x\in\bm X,\,\varPi\bm x= v}\mathcal K(\bm x),
	\]
	where 
	\[
	\mathcal K(\bm x): = 
	\eta\left(	x_1+\eta^{-1}\xi^\top A(x_2+Px_3)\right)^2+\nm{x_2+R_1^{\top}APx_3}_{\bar R^{-1}}^2+\nm{x_3}_{\bar B_2^{-1}}^2.
	\]
\end{lem}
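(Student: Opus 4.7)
The strategy is to recognize $\bar B$ as the symmetric multiplicative subspace correction (SSC) operator associated with the three auxiliary subspaces $V_1=\Vn$, $V_2=V$ (the whole space itself), and $V_3=\Vc$, equipped respectively with the exact self-adjoint solver $\xi\eta^{-1}\xi^\top$ on $\Vn$, the fine-level smoother $R_1$ on $V$, and $B_2$ acting through the injection $P$ on $\Vc$; then invoke the abstract Xu--Zikatanov identity \cite[Theorem 22]{xu_multilevel_2017}.

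First I would use the factorizations $I-\widehat R_1 A=(I-R_1A)(I-P_{\rm n})$ and $I-\widehat R_1^\top A=(I-P_{\rm n})(I-R_1^\top A)$, already derived inside the proof of \cref{lem:RtoR1}, to rewrite \cref{eq:barB} as
\begin{equation*}
I-\bar BA=(I-P_{\rm n})(I-R_1^\top A)(I-PB_2^\top P^\top A)(I-PB_2P^\top A)(I-R_1A)(I-P_{\rm n}),
\end{equation*}
which is the canonical symmetric multiplicative SSC error operator for the ordered triple $(V_1,V_2,V_3)$: one sweeps forward $\Vn\to V\to\Vc$ and then reverses the order with the adjoint solvers. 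Since the solver on $\Vn$ is self-adjoint and its symmetrization equals itself, and $\bar R$ and $\bar B_2$ are SPD (by \cref{lem:RtoR1} and by hypothesis, respectively), all the hypotheses of the abstract identity are met. Consequently $\bar B$ is SPD and $\nm{I-\bar BA}_A=1-1/c_0$, where $c_0$ equals the sup--inf of the Xu--Zikatanov functional
\[
\sum_{i=1}^{3}\nm{v_i+T_i^{*}(v_{i+1}+\cdots+v_3)}_{\bar R_i^{-1}}^{2}
\]
taken over all decompositions $v=v_1+v_2+v_3$ with $v_i\in V_i$ and $\nm{v}_A=1$. Here $T_i^{*}$ denotes the $A$-adjoint of the local correction and $\bar R_i$ the symmetrized local solver; on $\Vc$ we adopt the standard fictitious-space form, parametrizing $v_3=Px_3$ with $x_3\in\R^{N_{\rm c}}$ so that $\bar R_3^{-1}$ is replaced by $\bar B_2^{-1}$ acting on the coordinate $x_3$ (well-posed because $P$ has full column rank).

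It then suffices to match this abstract functional with $\mathcal K(\bm x)$ by direct computation. Writing $v_1=x_1\xi$, $v_2=x_2$, $v_3=Px_3$ (so $v=\varPi\bm x$), one has $T_1^{*}=P_{\rm n}=\xi\eta^{-1}\xi^\top A$ and $T_2^{*}=R_1^\top A$, and hence $T_1^{*}(v_2+v_3)=\eta^{-1}\xi\xi^\top A(x_2+Px_3)$ and $T_2^{*}v_3=R_1^\top APx_3$. Since $\bar R_1=\xi\eta^{-1}\xi^\top$ is rank one with range $\Vn$, the induced norm on that range satisfies $\nm{c\xi}_{\bar R_1^{-1}}^{2}=c^{2}\eta$; thus the $i=1$ summand collapses exactly to $\eta(x_1+\eta^{-1}\xi^\top A(x_2+Px_3))^{2}$, while the $i=2$ and $i=3$ summands reduce to $\nm{x_2+R_1^\top APx_3}_{\bar R^{-1}}^{2}$ and $\nm{x_3}_{\bar B_2^{-1}}^{2}$, respectively, reproducing $\mathcal K(\bm x)$.

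The main obstacle is handling the rank-degenerate solver on $\Vn$: the $\bar R_1^{-1}$-norm is only defined on the range of $\bar R_1$, but the argument $v_1+T_1^{*}(v_2+v_3)$ always lies in that range because $T_1^{*}=P_{\rm n}$ has image $\Vn$. A secondary point is that the abstract identity must be invoked in its fictitious-space form on $\Vc$; this is standard under the full column rank of $P$, which is exactly our assumption. Once these interpretive steps are in place, both the SPD-ness of $\bar B$ and the closed-form expression for $\nm{I-\bar BA}_A$ follow directly from the abstract identity.
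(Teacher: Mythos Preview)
Your proposal is correct and is precisely the intended route: the paper does not supply its own proof but simply cites the abstract Xu--Zikatanov identity \cite[Theorem 22, page 64]{xu_multilevel_2017}, and your argument carries out exactly that specialization to the three-space setting $(\Vn,V,\Vc)$ with local solvers $\xi\eta^{-1}\xi^\top$, $R_1$, $PB_2P^\top$, matching each summand of the abstract functional to the corresponding term of $\mathcal K(\bm x)$.
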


Define the $A$-orthogonal component of $\Vn$ as follows
\begin{equation}\label{eq:V0-perp}
	\Vr: = \left\{v_{\rm r}\in V:\dual{v_{\rm r},v_n }_{\!A}= 0\quad\forall\,v_n\in \Vn\right\}.
\end{equation}
It is evident that $V= \Vn\oplus \Vr$ and any $v\in V$ admits a unique $A$-orthogonal decomposition $v = v_{\rm n}+v_{\rm r}$, where $v_{\rm n}\in \Vn$ and $v_{\rm r}\in \Vr$ satisfy
\begin{equation}\label{eq:A_e-norm}
	\nm{v}_{\!A}^2 =  	\nm{v_{\rm n}}_{\!A}^2 +	\nm{v_{\rm r}}_{\!A}^2 +2\dual{v_{\rm r},v_{\rm n}}_{\!A} = \nm{v_{\rm n}}_{\!A}^2 +	\nm{v_{\rm r}}_{\!A}^2.
\end{equation}

Based on \cref{lem:2g-XZ}, we can establish a robust estimate of the constant $c_0$, which gets rid of the singular parameter $\epsilon$ in $A$. For the analysis of more general cases, we refer to \cite{lee_robust_2007}.
\begin{lem}\label{lem:chi1}
	Under the assumption of \cref{lem:2g-XZ}, we have $c_0\leq 1+\chi_1$ where
	\[
	\chi_1:= \sup_{v_{\rm r}\in \Vr,\,\nm{v_{\rm r}}_{\!A}=1}	\inf_{\substack{x_2+Px_3= v_{\rm r}\\x_2\in\R^N,x_3\in\R^{N_{\rm c}}}}
	\left\{	\nm{x_2+R_1^{\top}APx_3}_{\bar R^{-1}}^2+\nm{x_3}_{\bar B_2^{-1}}^2\right\}.
	\]
\end{lem}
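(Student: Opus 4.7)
The plan is to estimate $c_0$ from above by constructing, for each unit $A$-norm vector $v\in V$, a convenient admissible triple $\bm x=(x_1,x_2,x_3)\in\bm X$ with $\varPi\bm x=v$ for which $\mathcal K(\bm x)$ splits cleanly into a ``near-kernel'' contribution bounded by $\nm{v_{\rm n}}_{\!A}^2$ and a ``complementary'' contribution controllable by $\chi_1\nm{v_{\rm r}}_{\!A}^2$. The $A$-orthogonal decomposition $V=\Vn\oplus\Vr$ recalled in \cref{eq:V0-perp}--\cref{eq:A_e-norm} is the natural device for effecting this separation.

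First I would fix $v\in V$ with $\nm{v}_{\!A}=1$ and write $v=v_{\rm n}+v_{\rm r}$ with $v_{\rm n}=\alpha\xi\in\Vn$ and $v_{\rm r}\in\Vr$, so that $\nm{v_{\rm n}}_{\!A}^2=\alpha^2\eta$ and $\nm{v_{\rm n}}_{\!A}^2+\nm{v_{\rm r}}_{\!A}^2=1$. Then I would set $x_1=\alpha$ and restrict attention to $(x_2,x_3)\in\R^N\times\R^{N_{\rm c}}$ subject to $x_2+Px_3=v_{\rm r}$; the identity $\varPi\bm x=x_1\xi+x_2+Px_3=v_{\rm n}+v_{\rm r}=v$ then holds automatically. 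The crucial observation is that $v_{\rm r}\in\Vr$ is $A$-orthogonal to $\xi$, whence $\xi^\top A(x_2+Px_3)=\xi^\top Av_{\rm r}=0$, and the first term of $\mathcal K$ collapses to $\eta x_1^2=\nm{v_{\rm n}}_{\!A}^2$. This decoupling yields
\[
\inf_{\varPi\bm x=v}\mathcal K(\bm x)\;\leq\;\nm{v_{\rm n}}_{\!A}^2+\inf_{\substack{x_2+Px_3=v_{\rm r}\\x_2\in\R^N,\,x_3\in\R^{N_{\rm c}}}}\bigl(\nm{x_2+R_1^{\top}APx_3}_{\bar R^{-1}}^2+\nm{x_3}_{\bar B_2^{-1}}^2\bigr).
\]

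To conclude, I would use homogeneity. The constraint is linear and the cost functional inside the right-most infimum is $2$-homogeneous in $(x_2,x_3)$, so for $v_{\rm r}\neq 0$ normalizing by $\nm{v_{\rm r}}_{\!A}$ and invoking the definition of $\chi_1$ with $v_{\rm r}/\nm{v_{\rm r}}_{\!A}\in\Vr$ gives
\[
\inf_{x_2+Px_3=v_{\rm r}}\bigl(\nm{x_2+R_1^{\top}APx_3}_{\bar R^{-1}}^2+\nm{x_3}_{\bar B_2^{-1}}^2\bigr)\;\leq\;\chi_1\nm{v_{\rm r}}_{\!A}^2,
\]
while the case $v_{\rm r}=0$ is trivial by taking $x_2=x_3=0$. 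Combining the two displays and using $\nm{v_{\rm n}}_{\!A}^2\leq 1$ together with $\nm{v_{\rm r}}_{\!A}^2\leq 1$ yields $\inf\mathcal K(\bm x)\leq \nm{v_{\rm n}}_{\!A}^2+\chi_1\nm{v_{\rm r}}_{\!A}^2\leq 1+\chi_1$, and taking the supremum over $v$ finishes the proof. I do not expect a serious obstacle; the only subtle point is ensuring that the constraint $x_2+Px_3=v_{\rm r}\in\Vr$ triggers the cancellation $\xi^\top A(x_2+Px_3)=0$, without which a cross term between the $\Vn$ and $\Vr$ pieces would obstruct the clean additive bound $\nm{v_{\rm n}}_{\!A}^2+\chi_1\nm{v_{\rm r}}_{\!A}^2$.
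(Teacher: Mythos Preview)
Your proposal is correct and follows essentially the same approach as the paper: both proofs use the $A$-orthogonal decomposition $v=v_{\rm n}+v_{\rm r}$, set the first component $x_1$ so that $x_1\xi=v_{\rm n}$, restrict the remaining components to $x_2+Px_3=v_{\rm r}$, and invoke $\xi^\top Av_{\rm r}=0$ to collapse the first term of $\mathcal K$ to $\nm{v_{\rm n}}_{\!A}^2$. Your write-up is in fact slightly more explicit than the paper's, since you spell out the homogeneity argument that bounds the remaining infimum by $\chi_1\nm{v_{\rm r}}_{\!A}^2$ and handle the edge case $v_{\rm r}=0$ separately, whereas the paper passes over these points tacitly.
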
	
\begin{proof}
	Given any fixed $v\in V$, we have the unique decomposition $v=v_{\rm n}+v_{\rm r}$. Consider $\bm z=(z_1,0,0)$ with $z_1= v_{\rm n}^\top \xi/\nm{\xi}^2$, then $v_{\rm n} = \varPi \bm z\in V_{\rm n}$. It is clear that there exists at least one $\bm y = (0,y_2,y_3)\in\bm X$ such that $v_{\rm r} = \varPi\bm y\in V_{\rm r}$. Therefore, $\bm x = \bm y+\bm z$ is a special decomposition and $\varPi\bm x = v$. Observing the estimate
	\[
	\begin{aligned}
		{}&	\inf_{\bm x\in\bm X,\,\varPi\bm x= v}\left\{
		\eta\left(	x_1+\eta^{-1}\xi^\top A(x_2+Px_3)\right)^2+
		\nm{x_2+R^\top_1 APx_3}_{\bar R^{-1}}^2+\nm{x_3}_{\bar B_2^{-1}}^2\right\}\\
		\leq{}&	\inf_{\substack{\varPi\bm y= v_{\rm r}\\\bm y = (0,y_2,y_3)\in\bm X}}\left\{
		\eta\left(	z_1+\eta^{-1}\xi^\top A\varPi\bm y\right)^2+
		\nm{y_2+R^\top_1 APy_3}_{\bar R^{-1}}^2+\nm{y_3}_{\bar B_2^{-1}}^2\right\}\\	
		={}&\eta\snm{z_1}^2+	\inf_{\substack{y_2+Py_3= v_{\rm r}\\y_2\in\R^N,y_3\in\R^{N_{\rm c}}}}
		\left\{
		\nm{y_2+R^\top_1 APy_3}_{\bar R^{-1}}^2+\nm{y_3}_{\bar B_2^{-1}}^2\right\},
	\end{aligned}
	\]
	we obtain from \cref{lem:2g-XZ} and the fact $	\eta\snm{z_1}^2= \nm{v_{\rm n}}_{\!A}^2$ that $c_0\leq 1+\chi_1$.
\end{proof}
\subsection{Proof of \cref{thm:conv-2g-AMG}}
To move on, we prepare two key lemmas that are crucial for the proof of \cref{thm:conv-2g-AMG}. One is the {\it smoothing property} (see \cref{lem:smooth}) and the other is the {\it approximation property} (see \cref{lem:approx}). 
\begin{lem}[Smoothing property]
	\label{lem:smooth}
	If $R_1$ is the Gauss--Seidel smoother, then $\bar R$ is SPD, and $\nm{R^\top_1 Av}_{\bar R^{-1}}\leq C_s\nm{v}_A$ for all $v\in V$, where $C_s>0$ is independent of $\epsilon$ and $N$. 
\end{lem}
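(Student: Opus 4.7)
The plan is to (i) identify $\bar R$ in closed form, (ii) reduce the smoothing bound to an algebraic inequality between $A$ and its diagonal, and (iii) establish that inequality from the standard graph-Laplacian identity. Writing $A = D + L + L^\top$ with $D$ diagonal and $L$ strictly lower triangular, the Gauss--Seidel smoother is $R_1 = (D+L)^{-1}$, and a direct computation gives
\[
R_1^{-\top}\bar R R_1^{-1} = R_1^{-\top} + R_1^{-1} - A = (D+L^\top) + (D+L) - A = D,
\]
so we obtain the clean identity $\bar R = R_1^\top D R_1$. Since each diagonal entry of $A$ is $\epsilon + \Lambda_{ii} + (D_0)_{ii} > 0$ (using the standing assumption that $A_0$ has no zero rows), $D$ is SPD, whence so is $\bar R$.

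Next, $\bar R^{-1} = R_1^{-1}D^{-1}R_1^{-\top}$ yields $R_1\bar R^{-1}R_1^\top = D^{-1}$, and hence
\[
\nm{R_1^\top A v}_{\bar R^{-1}}^2 = v^\top A R_1 \bar R^{-1} R_1^\top A v = v^\top A D^{-1} A v.
\]
The lemma therefore reduces to the operator inequality $A D^{-1} A \preceq 2A$, which will give $C_s = \sqrt 2$. Since $A$ is only SPSD in general, let $A^{1/2}$ denote its PSD square root. The candidate inequality $A \preceq 2D$ is equivalent, after symmetric conjugation by $D^{-1/2}$, to $A^{1/2} D^{-1} A^{1/2}\preceq 2I$, and sandwiching by $A^{1/2}$ on both sides then produces exactly $A D^{-1} A = A^{1/2}(A^{1/2} D^{-1} A^{1/2})A^{1/2}\preceq 2A$.

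It remains to prove $A \preceq 2D$. The common part $\epsilon I + \Lambda$ appears in both sides and cancels, so one is reduced to the familiar Laplacian inequality $A_0 \preceq 2 D_0$. Writing the quadratic form as a sum over the edges of the bipartite graph $\mathcal G$ with nonnegative weights $w_e$,
\[
v^\top A_0 v = \sum_{e=\{i,j\}\in \mathcal E} w_e(v_i - v_j)^2 \leq 2\sum_{e=\{i,j\}\in \mathcal E} w_e(v_i^2 + v_j^2) = 2 v^\top D_0 v,
\]
and adding $v^\top(\epsilon I+\Lambda)v$ on both sides yields $v^\top A v \leq 2 v^\top D v$. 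The resulting constant $C_s = \sqrt 2$ depends neither on $\epsilon$, on $\Lambda$, on the edge weights, nor on the size $N$, as required.

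The main obstacle is essentially bookkeeping: once one spots the cancellation that produces $\bar R = R_1^\top D R_1$, the smoothing quantity collapses to $v^\top A D^{-1} A v$ and the rest is the textbook graph-Laplacian estimate. The one subtlety worth flagging is that when $\epsilon$ and $\Lambda$ are both near zero, $A$ is only positive semidefinite, so the implication $A\preceq 2D\Rightarrow AD^{-1}A\preceq 2A$ has to be handled at the PSD-operator level via $A^{1/2}$ rather than by a direct eigenvalue comparison that would require $A$ to be SPD.
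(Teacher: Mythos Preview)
Your proof is correct and, after the common opening, takes a genuinely different route from the paper. Both arguments begin by computing $\bar R = R_1^\top D R_1$ (hence SPD) and reducing the smoothing bound to $v^\top A D^{-1} A v \le C_s^2\, v^\top A v$. At that point the paper invokes an external result (Zikatanov's two--sided estimate $\tfrac14 D \preceq \bar R^{-1}\preceq C_s^2 D$ with $C_s=\max_i|\mathcal N(i)|$), combines it with the identity $\bar R^{-1}=A+L D^{-1}L^\top$ to get $A\preceq C_s^2 D$, and then concludes via $D^{-1}\preceq C_s^2 A^{-1}$. You instead establish $A\preceq 2D$ directly from the edge expansion $v^\top A_0 v=\sum_e w_e(v_i-v_j)^2\le 2v^\top D_0 v$ and pass to $AD^{-1}A\preceq 2A$ via the $A^{1/2}$ sandwich. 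Your argument is self--contained and yields the universal constant $C_s=\sqrt{2}$; the paper's $C_s$ is the maximal vertex degree of the graph of $A$, which is independent of $\epsilon$ but only independent of $N$ under a sparsity assumption on the underlying iterate. Two small remarks: (i) your phrase ``equivalent after conjugation by $D^{-1/2}$'' literally gives $D^{-1/2}AD^{-1/2}\preceq 2I$, and the step to $A^{1/2}D^{-1}A^{1/2}\preceq 2I$ uses that $BC$ and $CB$ share nonzero spectrum; (ii) in the paper the symbol $C_s$ is reused in the approximation Lemma and in Theorem~\ref{thm:conv-2g-AMG} with the degree--based meaning, so your sharper constant here does not automatically propagate to the two--grid estimate.
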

\begin{proof}
	Recall the splitting $A= D+L+L^{\top}$ where $D$ is the diagonal part and $L$ is the strictly lower triangular part. Since $R_1 = (D+L)^{-1}$, it follows that $\bar{ R}= R_1^\top(R_1^{-1}+R_1^{-\top}-A)R_1 = R_1^\top DR_1$ is SPD. 	
	
	Then let us prove that $	\nm{R^\top_1 Av}_{\bar R^{-1}}\leq C_s\nm{v}_A$ for all $v\in V$. Thanks to \cite[Lemma 3.3]{zikatanov_two-sided_2008}, we have 	
	\begin{equation}\label{eq:barR-D}
		\frac{1}{4}	\dual{Dv,v}\leq	\dual{\bar{R}^{-1}v,v}\leq C_{s}^2\dual{Dv,v},
	\end{equation}
	where $		C_s: = \max_{1\leq i\leq N}\snm{\mathcal N(i)}$. Since $\mathcal N(i): = \left\{j\neq i:A_{ij}\neq 0\right\}$, we note that $C_s$ depends only on the sparsity of $A$. In view of \cref{eq:barR-D} and the identity 
	\[
	\dual{\bar{R}^{-1}v,v}=\nm{L^\top v}_{D^{-1}}^{2} + \nm{v}^2_{\!A},
	\]
	we find that $\dual{D^{-1}v,v}\leq C_s^2\dual{A^{-1}v,v}$. Consequently, as $D^{-1}= R_1\bar{R}^{-1}R_1^\top$, we obtain
	\[
	\begin{aligned}
		\nm{R^\top_1 Av}_{\bar R^{-1}}^2  ={} \dual{D^{-1}Av,Av}\leq C_s^2\nm{v}^2_{\!A},
	\end{aligned}
	\]
	which ends the proof of this lemma.
\end{proof}

Let $\mathcal F = \{i_1,i_2,\cdots,i_{n_{\rm f}}\}$ and $\mathcal C = \{j_1,j_2,\cdots,j_{n_{\rm c}}\}$
be the $ \mathcal C\backslash\mathcal F$-splitting of $A$ with respect to the strength parameter $\delta>0$. Denote by
$e_i$ the $i$-th canonical basis of $ \R^N$ and set
\[
W_{\rm f}: = {\rm span}\{e_{k}:k\in\mathcal F\},\quad W_{\rm c}: = {\rm span}\{e_k:k\in \mathcal C\}.
\]
Let 
$P_{\rm f}:V\to W_{\rm f}$ be the $A$-orthogonal projection, i.e.,
\[
P_{\rm f} = \widetilde{W}_{\rm f}(\widetilde{W}_{\rm f}^\top A\widetilde{W}_{\rm f})^{-1}\widetilde{W}_{\rm f}^\top A,
\]
with $\widetilde{W}_{\rm f} = (e_{i_1},e_{i_2},\cdots,e_{i_{n_{\rm f}}})\in\R^{N\times n_{\rm f}}$. 
\begin{lem}[Approximation property]\label{lem:approx}
	Let $R_1$  be  the Gauss--Seidel smoother and $P_1$ is the ideal interpolation, then for any $v\in V$, we have $v_0=(I-P_{\rm f})v\in \Vc$ and 
	\begin{equation}\label{eq:key}
		\nm{v-v_0}_{ \bar R^{-1}}\leq C_s\sqrt{1+C_s/\delta}\nm{v}_{A},\quad \nm{v_0}_{\!A}\leq \nm{v}_{\!A}.
	\end{equation}
\end{lem}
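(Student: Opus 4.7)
The plan is to verify the three claims in sequence, with the main work concentrated on the $\bar R^{-1}$-bound, which embodies the classical AMG approximation property.

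First I would establish that $v_0 = (I-P_{\rm f})v$ actually lies in $\Vc$. The key observation is that the range of the ideal interpolation $P_1 = \Xi\left(\begin{smallmatrix}-A_{FF}^{-1}A_{FC}\\I\end{smallmatrix}\right)$ coincides with the $A$-orthogonal complement of $W_{\rm f}$. Indeed, for any $w\in W_{\rm f}$ and any $u = P_1 x\in\Vc$, a direct block computation in the partition induced by $\Xi$ gives $\dual{w,u}_{\!A} = w_F^\top A_{FF}(-A_{FF}^{-1}A_{FC}x) + w_F^\top A_{FC}x = 0$. Hence $\Vc \subseteq W_{\rm f}^{\perp_A}$, and a dimension count gives equality. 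Since $P_{\rm f}$ is the $A$-orthogonal projection onto $W_{\rm f}$, its complement $I-P_{\rm f}$ projects onto $W_{\rm f}^{\perp_A}=\Vc$, proving $v_0\in\Vc$. The second bound $\nm{v_0}_{\!A}\leq\nm{v}_{\!A}$ is then immediate from the Pythagorean identity $\nm{v}_{\!A}^2=\nm{P_{\rm f}v}_{\!A}^2+\nm{v_0}_{\!A}^2$.

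Next I would reduce the $\bar R^{-1}$-estimate to a $D$-norm estimate. Writing $w := v-v_0 = P_{\rm f}v\in W_{\rm f}$ and applying the key inequality $\dual{\bar R^{-1}w,w}\leq C_s^2\dual{Dw,w}$ proved inside Lemma \ref{lem:smooth}, it suffices to show
\begin{equation*}
\nm{P_{\rm f}v}_D^2 \leq \left(1+C_s/\delta\right)\nm{v}_{\!A}^2.
\end{equation*}
To bound $\nm{P_{\rm f}v}_D^2 = \sum_{i\in\mathcal F} D_{ii}w_i^2$, I would exploit the $\mathcal C\backslash\mathcal F$-splitting: each fine index $i\in\mathcal F$ admits a strongly connected coarse index $j(i)\in\mathcal C$ with $|A_{i,j(i)}|\geq \delta\,\min\{\max_k|A_{ik}|,\max_k|A_{j(i),k}|\}$, hence in particular $|A_{i,j(i)}|\geq \delta\, D_{ii}/C_s$ by the sparsity bound from the proof of Lemma \ref{lem:smooth}. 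Pairing each fine node with this coarse anchor produces a localized test vector whose local energy controls $D_{ii}w_i^2$ via a Cauchy--Schwarz/arithmetic-geometric argument, while the factor $1/\delta$ accounts for converting the off-diagonal magnitude back into the diagonal size. Summing over $i\in\mathcal F$ and using the orthogonality $\dual{w,v-w}_{\!A}=0$ (so $w_F^\top A_{FF} w_F = \dual{w,v}_{\!A}\leq \nm{v}_{\!A}^2$) produces the claimed factor $1+C_s/\delta$.

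The main obstacle will be the last step: carefully constructing the local argument that converts strong connectivity (in the sense of the strength function $s_A$) into the quantitative bound $D_{ii}w_i^2 \lesssim \text{local energy of }v$. This is the only place where the strength threshold $\delta$ genuinely enters, and some care is needed since the off-diagonal entries of the graph Laplacian are nonpositive while $\Lambda$ and $\epsilon I$ only enlarge the diagonal; robustness in $\epsilon$ follows once the estimate is written entirely in terms of $A$-quantities (row-maxima of $|A_{ij}|$ and the $A$-energy of $v$), so that the extra mass on the diagonal does not amplify constants. The bounds on $P_{\rm f}v$ then transfer directly to $v-v_0$, completing the proof.
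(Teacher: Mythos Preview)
Your proposal is correct and follows essentially the same route as the paper: first identify $\Vc$ with the $A$-orthogonal complement of $W_{\rm f}$ (the paper phrases this as $P_1=(I-P_{\rm f})\widetilde W_{\rm c}$ and decomposes $v=w_{\rm c}+w_{\rm f}$, which amounts to the same thing), then reduce the $\bar R^{-1}$-bound to the $D$-norm via \cref{eq:barR-D}, and finally combine the fine-space stability estimate $\nm{v_{\rm f}}_D^2\leq(1+C_s/\delta)\nm{v_{\rm f}}_A^2$ with $\nm{P_{\rm f}v}_A\leq\nm{v}_A$. The only difference is that the paper does not prove this stability estimate inline but cites \cite[Theorem 12.3]{xu_algebraic_2017}, whereas you outline a direct argument for it.
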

\begin{proof}
	Recall that $	\Vc = {\rm span}\left\{v_{\rm c} = P_1x:x\in\R^{N_{\rm c}}\right\}$.
	By direct computations, we have $	P_1 = \widetilde{W}_{\rm c}-P_{\rm f}\widetilde{W}_{\rm c}$ where $\widetilde{W}_{\rm c} = (e_{j_1},e_{j_2},\cdots,e_{j_{n_{\rm c}}})\in\R^{N\times n_{\rm c}}$.  Thus it is evident that 
	\[
	\Vc={\rm span}\{v_{\rm c} = (I-P_{\rm f})w_{\rm c}:w_{\rm c}\in W_{\rm c}\}.
	\]
	On the other hand, since $V = W_{\rm c}\oplus W_{\rm f}$, any $v\in V$ admits a decomposition $v = w_{\rm c}+w_{\rm f}$ with $w_{\rm c}\in W_{\rm c}$ and $w_{\rm f}\in W_{\rm f}$. It follows from the fact $P_{\rm f}w_{\rm f} = w_{\rm f}$ that $(I-P_{\rm f})v = (I-P_{\rm f})w_{\rm c}\in V_{\rm c}$.
	
	Then let us focus on \cref{eq:key}.
	It is trivial to obtain $\nm{v_0}_{\!A}\leq \nm{v}_{\!A}$. 	Following the idea of \cite[Theorem 12.3]{xu_algebraic_2017}, we are already to establish
	\begin{equation}\label{eq:Cstab}
		\nm{v_{\rm f}}_{ D}^2\leqslant \left(1+C_s/\delta\right)\nm{v_{\rm f}}_{A}^2
		\quad\forall\,v_{\rm f}\in W_{\rm f}.
	\end{equation}
		In view of \cref{eq:barR-D}, it holds that
		\[
		\nm{v-v_0}_{ \bar R^{-1}}^2=\nm{P_{\rm f}v}_{\bar R^{-1}}^2
		\leq C_s^2\nm{P_{\rm f}v}_{D}^2
		\leq\left(C_s^2+C^3_s/\delta\right)\nm{v}_{A}^2,
		\]
		which finishes the proof of \cref{eq:key}.
	\end{proof}
	
	Borrowing the idea from \cite{li_two-level_2015,li_analysis_2016}, we are now in a position to prove \cref{thm:conv-2g-AMG}. 
	\vskip0.2cm\noindent{\bf Proof of \cref{thm:conv-2g-AMG}} 
	It is clear that the ideal interpolation $P_1$ has full rank. Since $R_1$ is the Gauss-Seidel smoother, by \cref{lem:smooth}, $\bar R$ is SPD. Hence, using \cref{lem:RtoR1,lem:2g-XZ,lem:chi1} leads to 
	\begin{equation}\label{eq:c1}
		\nm{I-B_1A}_A = 1-\frac{1}{c_1},\quad c_1\leq 1+\chi_1,
	\end{equation}
	where 
	\[
	\chi_1= \sup_{v_{\rm r}\in \Vr,\,\nm{v_{\rm r}}_{\!A}=1}	\inf_{\substack{x_2+P_1x_3= v_{\rm r}\\x_2\in\R^N,x_3\in\R^{N_{\rm c}}}}
	\left\{	\nm{x_2+R^{\top}_1AP_1x_3}_{\bar R^{-1}}^2+\nm{x_3}_{\bar B_2^{-1}}^2\right\}.
	\]
	Using \cref{lem:smooth} again, we have 
	\[
	\nm{x_2+R^{\top}_1AP_1x_3}_{\bar R^{-1}}^2\leq 
	2\nm{x_2}_{\bar R^{-1}}^2+2\nm{R^{\top}_1AP_1x_3}_{\bar R^{-1}}^2
	\leq 2\nm{x_2}_{\bar R^{-1}}^2+2C^2_s\nm{P_1x_3}_A^2.
	\]
	Since both $\bar B_2$ and $A_2 = P_1^\top AP_1$ are SPD, invoking the proof of \cite[Theorem 5, page 23]{xu_multilevel_2017}, we have $\nm{I-\bar B_2A_2}_{A_2} = 1-\lambda_{\min}(\bar B_2A_2)<1$, where $\lambda_{\min}(\bar B_2A_2)>0$ denotes the smallest eigenvalue of $\bar B_2A_2$. This yields that
	\[
	\nm{x_3}_{\bar B_2^{-1}}^2 \leqslant \frac{\nm{x_3}_{A_2}^2}{ \lambda_{\min}(\bar B_2A_2)}	 = \frac{\nm{P_1x_3}_{\!A}^2}{1-	\nm{I-\bar B_2A_2}_{A_2}}.
	\]
	Combining the above two estimates gives
	\[\small
	\begin{aligned}
		{}&
		\nm{x_2+R^{\top}_1AP_1x_3}_{\bar R^{-1}}^2+\nm{x_3}_{\bar B_2^{-1}}^2
		\leqslant {}
		2\nm{x_2}_{\bar R^{-1}}^2+
		\left(2C_s^2+\frac{1}{1-	\nm{I-\bar B_2A_2}_{A_2}}\right)	
		\nm{P_1x_3}_{\!A}^2.
	\end{aligned}
	\]
	Consequently, we arrive at 
	\[
	\chi_1\leq {}	\sup_{v_{\rm r}\in \Vr,\,\nm{v_{\rm r}}_{\!A}=1}	\inf_{v_{\rm c}\in \Vc}
	\left\{
	2\nm{v_{\rm r}-v_{\rm c}}_{\bar R^{-1}}^2+
	\left(2C_s^2+\frac{1}{1-	\nm{I-\bar B_2A_2}_{A_2}}\right)	
	\nm{v_{\rm c}}_{\!A}^2\right\}.
	\]
	Thanks to \cref{eq:barR-D,lem:approx}, taking $v_{\rm c} = (I-P_{\rm f})v_{\rm r}\in V_{\rm c}$ implies that 
	\[
	\chi_1\leq 2C_s^2\left(2+C_s/\delta\right)+
	\frac{1}{1-	\nm{I-\bar B_2A_2}_{A_2}}.
	\]
	Plugging this into \cref{eq:c1} leads to \cref{eq:conv-2gd} and thus concludes the proof of \cref{thm:conv-2g-AMG}. 
\section{Numerical Tests}
\label{sec:num}
This section is devoted to essential numerical experiments for validating the efficiency of our algorithm. In \cref{sec:num-part1-amg}, we aim to verify the robust performance of the AMG $W$-cycle iteration \cref{eq:AMG-W-iter} for solving a nearly singular graph Laplacian system. Then in \cref{sec:num-part2}, we apply the overall semismooth Newton-AMG-based inexact primal-dual  method (see \cref{algo:ipd-ssn-amg}) to several transport-like problems listed in \cref{sec:prob} and conduct extensive compassions with existing baseline algorithms. All numerical tests are implemented in MATLAB (version R2021a) on a MacBook Air Laptop.

\subsection{Performance of AMG}
\label{sec:num-part1-amg}
Consider the linear algebraic system
\begin{equation}\label{eq:amg-pcg-test1}
	Ax = \left(\epsilon I+A_h\right)x = f.
\end{equation}
Here $A_h$ is the stiffness matrix of a conforming bilinear finite element method \cite{brenner_mathematical_2008} for the pure Neumann problem 
\[\left\{
\begin{aligned}
	{}&	-\Delta u = g&&\textrm{in}~\Omega:=(0,1)^2,\\
	{}&\nabla u\cdot\mathbf{n} = 0&&\textrm{on}~\partial \Omega,
\end{aligned}
\right.
\]
where $\mathbf{n}$ is the unit outward normal vector of  $\partial \Omega$ and $g:\Omega\to \R$ is a square integrable function with vanishing average. 
\begin{figure}[H]
	\centering
	\includegraphics[width=10cm]{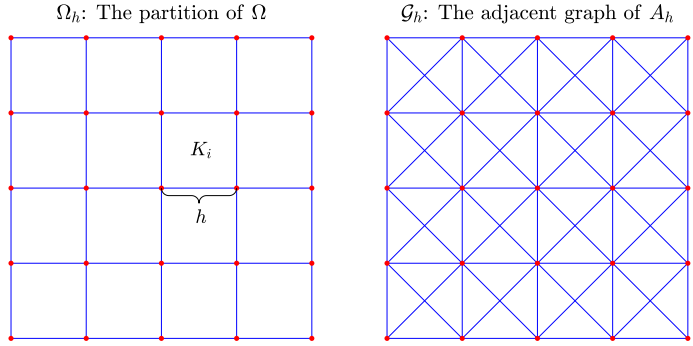}
	\caption{Illustrations of $\Omega_h$ and $\mathcal G_h$. }
	\label{fig:amg-pcg-test1-mesh}
\end{figure}

Let $\Omega_h = \mathop{\cup}_{i}K_i$ be a subdivision of $\Omega$, where each $K_i$ is a square with edge length $h = 2^{-k},\,k \in \mathbb N$. The  stiffness matrix $A_h\in\R^{N_h\times N_h}$ corresponds to $\Omega_h$ is sparse and symmetric positive semidefinite with $N_h = (1+1/h)^2$. Moreover, $A_h$ is the Laplacian matrix of some connected graph $\mathcal G_h$, which can be obtained from $\Omega_h$ by adding the two diagonal lines of each element $K_i$; see \cref{fig:amg-pcg-test1-mesh}.

We apply AMG $W$-cycle iteration \cref{eq:AMG-W-iter} and PCG (cf. \cite[Algorithm 9.1]{saad_iterative_2003}) to \cref{eq:amg-pcg-test1} with different $\epsilon$ and mesh size $h$. For PCG, we choose the diagonal (Jacobi) preconditioner. For AMG, we adopt weighted Jacobi smoother $R_\ell =  1/2 D_\ell$ and the number of smoothing iteration is $\theta = 5$. Additionally, to obtain a maximal independent set via \cref{algo:C-F} and avoid the for loop, we adopt a subroutine from the MATLAB software package: $i$FEM \cite{Chen:2008ifem}.
\renewcommand\arraystretch{1.3}
\begin{table}[H]
	\centering
	\caption{Number of iterations of AMG and PCG. }
	\label{tab:amg-pcg-test1-even}
	\setlength{\tabcolsep}{3pt}
	\begin{tabular}{|c|c|c|c|c|c|c|c|c|c|c|}
		\hline
		\multirow{2}{*}{$1/h$}&\multicolumn{2}{c|}{$\epsilon = 10^{-4}$}
		&\multicolumn{2}{c|}{$\epsilon = 10^{-6}$}
		&\multicolumn{2}{c|}{$\epsilon = 10^{-8}$}
		&\multicolumn{2}{c|}{$\epsilon = 10^{-10}$}
		&\multicolumn{2}{c|}{$\epsilon = 0$}\\
		\cline{2-11}
		
		&$\mathtt{itamg}$  &$\mathtt{itpcg}$ 
		&$\mathtt{itamg}$  &$\mathtt{itpcg}$ 
		&$\mathtt{itamg}$  &$\mathtt{itpcg}$ 		
		&$\mathtt{itamg}$  &$\mathtt{itpcg}$ 
		&$\mathtt{itamg}$  &$\mathtt{itpcg}$ 		\\ 
		\hline
		
		$2^{4} $                 & 9  & 62  & 10  & 66  & 9 & 69  & 9 & 52 & 10 & 52\\ \hline
		$2^{6} $                 & 9  & 230  & 9  & 250  & 9 & 266  & 9 & 201 & 10 & 201\\ \hline
		$2^{8} $                 & 9  & 789  & 9  & 906  & 9 & 969  & 10 & 740 & 9 & 740\\ \hline
		$2^{10} $               & 9  & 1427  & 10  & 3158  & 9 & 3531  & 10 & 2680 & 9 & 2680\\ \hline
	\end{tabular}
\end{table}

In \cref{tab:amg-pcg-test1-even}, we report the number of iterations of AMG (cf. $\mathtt{itamg}$) and PCG (cf. $\mathtt{itpcg}$), under the stop criterion
\[
\frac{\nm{Ax_k-f}}{\nm{Ax_0-f}}\leqslant \mathtt{Tol}=10^{-11}.
\]
As we can see, AMG is very robust with respect to both the singular parameter $\epsilon$ and the problem size $N_h$. While the number of PCG iterations grows in terms of $N_h$. If $\epsilon $ is decreasing and larger than $\mathtt{Tol}$, then due to the nearly singular issue, $\mathtt{itpcg}$ also increases. When $\epsilon$ is close to (or is smaller than) $\mathtt{Tol}$, the term $\epsilon I$ in $A$ is negligible and \cref{eq:amg-pcg-test1} can be viewed almost as a singular system. In this situation, $\mathtt{itpcg}$ tends to the case $\epsilon = 0$. To further show this dependence on $\epsilon$ more clearly, in \cref{fig:amg-pcg-test1-iter}, we plot the number of iterations for two cases: $h = 2^{-7}$ and $h=2^{-9}$.

\begin{figure}[H]
	\centering
	\includegraphics[width=13cm]{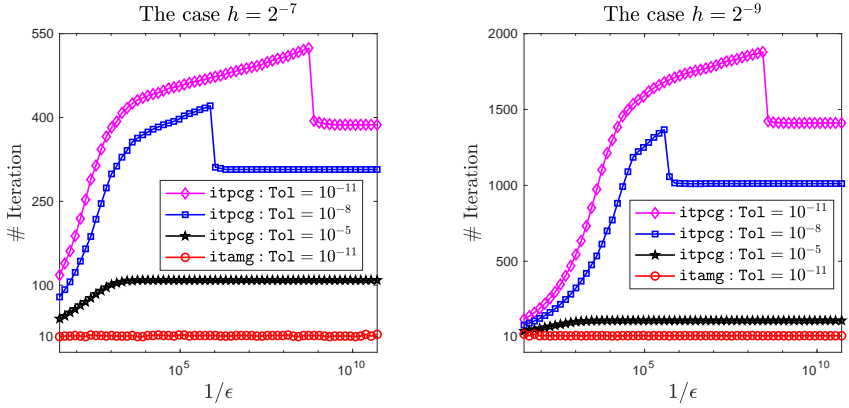}
	\caption{Growth behaviors of $\mathtt{itamg}$ and  $\mathtt{itpcg}$ with respect to the parameter $\epsilon$ and the tolerance $\mathtt{Tol}$. 
	}
	\label{fig:amg-pcg-test1-iter}
\end{figure}
\renewcommand\arraystretch{1.3}
\begin{table}[H]
	\centering
	\caption{The number of levels and the operator complexity of AMG.}
	\label{tab:amg-test1-nnz}
	\setlength{\tabcolsep}{8pt}
	\begin{tabular}{|c|c|c|c|c|c|c|c|c|c|c|}
		\hline
		\multirow{2}{*}{$1/h$} &\multicolumn{2}{c|}{$\epsilon = 10^{-4}$}
		&\multicolumn{2}{c|}{$\epsilon = 10^{-6}$}
		&\multicolumn{2}{c|}{$\epsilon = 10^{-8}$}
		&\multicolumn{2}{c|}{$\epsilon = 10^{-10}$}
		&\multicolumn{2}{c|}{$\epsilon = 0$}\\ \cline{2-11}
		
		&$J$ &$\mathtt{opcom}$ 
		&$J$ &$\mathtt{opcom}$ 
		&$J$  &$\mathtt{opcom}$ 
		&$J$ &$\mathtt{opcom}$ 
		&$J$ &$\mathtt{opcom}$ 	\\ \hline

		$2^{4} $              & 4  & 1.47  & 4  & 1.49  & 4 & 1.41 & 4 & 1.50 & 4 & 1.40 \\ \hline
		$2^{6} $              & 5  & 1.64  & 5  & 1.62  & 5 & 1.65 & 5 & 1.62 & 5 & 1.65 \\ \hline
		$2^{8} $              & 6  & 1.66  & 6  & 1.68  & 6 & 1.67 & 6 & 1.67 & 6 & 1.66 \\ \hline
		$2^{10} $            & 7  & 1.68  & 7  & 1.69  & 7 & 1.68 & 7 & 1.69 & 7 & 1.69 \\ \hline
		
	\end{tabular}
\end{table}

Except for the number of iterations, we also record two crucial ingredients of the multilevel hierarchy: (i) the number of levels $J$ and (ii) the {\it operator complexity} ($ \mathtt{opcom}$ for short)
\[
\mathtt{opcom} := \frac{\sum_{\ell=1}^{J}\mathtt{nnz}(A_\ell)}{ \mathtt{nnz}(A)}.
\]
The quantity $ \mathtt{opcom}$ is often used to measure the computational complexity of the AMG algorithm. From  \cref{tab:amg-test1-nnz}, we might observe the growth magnitude $\mathcal O(|\ln h|)$, as mentioned in \cref{rem:level-J}. This is almost negligible and thus both $J$ and $ \mathtt{opcom}$ are robust with respect to $h$ and $\epsilon$.

\subsection{The overall IPD-SsN-AMG method}
\label{sec:num-part2}
Combining \cref{algo:Inexact-PD,algo:SsN,algo:Hybrid_Solver,algo:MG-W}, we obtain the overall semismooth Newton-AMG-based Inexact Primal-Dual  (IPD-SsN-AMG for short) method for solving the generalized transport problem \cref{eq:GOT-x}; see \cref{algo:ipd-ssn-amg}.

\begin{algorithm}[H]
	\caption{The overall IPD-SsN-AMG method}
	\label{algo:ipd-ssn-amg}
	\begin{algorithmic}[1] 
		\REQUIRE  KKT tolerance: $\mathtt{KKT\_Tol}$.\\
		\qquad~SsN iteration tolerance: $\mathtt{SsN\_Tol}$.\\
		\qquad~Maximal SsN iteration number: $j_{\max}\in\mathbb N$.\\		
		\qquad~Line search parameters: $\tau \in(0,1/2),\,\delta \in(0,1)$.\\
		\qquad~Initial guesses: $\beta_0>0,\,\lambda_0\in\R^{m+n+r}$ and $\bm u_0=\bm v_0\in\R^{mn+n+m}$.\\
		\FOR{$k=0,1,\cdots$}
		\STATE Choose the step size $\alpha_k>0$.
		\STATE Set $	\tau_{k} ={} \beta_k(1+\alpha_k)/\alpha_k^2$ and $\eta_{k} =\sigma+\tau_{k}$.
		\STATE Set $D_{k} ={\rm diag}(\eta_{k}I_{mn},\tau_{k}I_n,\tau_{k}I_m)$ and $\bm w_k={} \widetilde{c}+\beta_{k}(\bm u_k+\alpha_k\bm v_k)/\alpha_k^2$.		
		\STATE Update $\displaystyle \beta_{k+1}= \beta_k/(1+\alpha_k)$ and set $	\widetilde{\lambda}_k={}\beta_{k+1}\left[\lambda_k-\beta_{k}^{-1}(H\bm u_k- b)\right]-b$.
		\STATE Set $ \lambda_{\rm new} = \lambda_k$.
		\FOR[SsN iteration]{$j = 0,1,\cdots$}
		\STATE Set $\lambda_{\rm old} = \lambda_{\rm new}$ and $\bm z_k= D^{-1}_{k}(\bm w_k-H^\top\lambda_{\rm new})$.
		\STATE Compute the diagonal matrix
		$U_k \in \partial \proj_{\Sigma}\left(\bm z_k\right)$ from \cref{eq:par-proj}.
		\STATE Transform the linear equation 
		\[
		\mathcal J_k\zeta= \left( \beta_{k+1}I+ HD_k^{-1}U_kH^{\top}\right)\zeta = -F_k(\lambda_{\rm old})
		\]
		into the reduced graph Laplacian system \cref{eq:equi-calA-eps}. 
		\STATE Apply \cref{algo:Hybrid_Solver,algo:MG-W} to \cref{eq:equi-calA-eps} and recover the solution $\zeta$.   \quad \{AMG\} \label{algo:amg}
		\STATE Update $\lambda_{\rm new} = \lambda_{\rm old} + \delta^\ell \zeta$ with the smallest nonnegative integer $\ell\in\mathbb N$ that satisfies $
		\mathcal F_k(\lambda_{\rm old}+\delta^\ell \zeta)\leq \mathcal F_k(\lambda_{\rm old})+\tau\delta^\ell\dual{F_k(\lambda_{\rm old}),\zeta}$. \quad \{Line search\}
		\IF[Check the SsN iteration]{$\nm{F_k(\lambda_{\rm new})}\leq\mathtt{SsN\_Tol}$ or $j\geq j_{\rm max}$}\label{algo:ssn}
		\STATE {\bf break}
		\ENDIF				
		\ENDFOR
		\STATE Update $\lambda_{k+1} = \lambda_{\rm new},\,\bm u_{k+1} = \proj_{\Sigma}\left(\bm z_k\right)$ and $\bm v_{k+1} = \bm u_{k+1}+(\bm u_{k+1}-\bm u_k)/\alpha_k$.
		\IF[Check the KKT residual]{${\rm Res}(k+1)\leqslant \mathtt{KKT\_Tol}$}\label{algo:kkt}
		\STATE {\bf break}
		\ENDIF		
		\ENDFOR
	\end{algorithmic}
\end{algorithm}

Detailed parameter choices and operations are explained in order. In step \ref{algo:amg}, the settings of the AMG $W$-cycle are the same as that in \cref{sec:num-part1-amg}. Note that  \cref{algo:Hybrid_Solver} requires the connected components of the graph $\mathcal G = (\mathcal V,\mathcal E)$ with respect to the Laplacian matrix $\mathcal A_0$ in \cref{eq:equi-calA-eps}. This can be done by using graph searching algorithms \cite{jungnickel_graphs_2005} such as breadth first search (with the complexity $\mathcal O(|\mathcal V||\mathcal E|)$) and depth first search (with the complexity $\mathcal O(|\mathcal E|) = \mathcal O(\mathtt{nnz}(\mathcal A_0))$). Thanks to the bipartite structure, we adopt the MATLAB built-in function $\mathtt{dmperm}$ that provides the Dulmage--Mendelsohn decomposition of $\mathcal A_0$ and also returns the connected components.

For SsN iteration, the line search parameters are $ \tau = 0.2$ and $\delta = 0.9$, and in step \ref{algo:ssn}, it shall be terminated when either $j$ is larger than the maximal iteration number $j_{\rm max} = 15$ or $\nm{F_k(\lambda_{\rm new})}$ is smaller than the tolerance $\mathtt{SsN\_Tol} = \max\{\beta_k(k+1)^{-2},\,10^{-11}\}$.

Moreover, in step \ref{algo:kkt} we  impose the stop criterion
\begin{equation}\label{eq:kkt }
	{\rm Res}(k) :=\max\left\{\frac{{\rm KKT}(x_k)}{{\rm KKT}(x_0)},\,\frac{{\rm KKT}(y_k)}{{\rm KKT}(y_0)},\,\frac{{\rm KKT}(z_k)}{{\rm KKT}(z_0)},\,\frac{{\rm KKT}(\lambda_k)}{{\rm KKT}(\lambda_0)}\right\}\leq \mathtt{KKT\_Tol},
\end{equation}
where $\mathtt{KKT\_Tol}>0$ denotes the tolerance and the KKT residuals are defined by
\[
\left\{
\begin{aligned}
	{\rm KKT}(x_k): ={}& \nm{x_k-\proj_{\mathcal X}(\sigma\phi+(1-\sigma)x_k-c-G^\top\lambda_k)},\\
	{\rm KKT}(y_k): = {}&\nm{y_k-\proj_{\mathcal Y}(y_k-I_Y^\top\lambda_k)},\\
	{\rm KKT}(z_k): = {}&\nm{z_k-\proj_{\mathcal Z}(z_k-I_Z^\top\lambda_k)},\\
	{\rm KKT}(\lambda_k): ={}&\nm{Gx_k+I_Yy_k+I_Zz_k-b}.	
\end{aligned}
\right.
\]

In the sequel, we investigate the performance of our IPD-SsN-AMG method on 
specific problems including optimal transport, Birkhoff projection and partial optimal transport. Also, comparisons with the semismooth Newton-based augmented Lagrangian methods proposed in \cite{li_asymptotically_2020,li_efficient_2020} and the accelerated ADMM method in \cite{luo_unified_2021} will be presented, under the same stopping condition \cref{eq:kkt } with $\mathtt{KKT\_Tol} = 10^{-6}$.

The methods in \cite{li_asymptotically_2020,li_efficient_2020} adopt PCG as the linear system solver, and the (super-)linear convergence analysis is based on classical proximal point framework together with proper error bound assumption. For convenience, we abbreviate these two methods simply as ALM-SsN-PCG. The method in \cite{luo_unified_2021}, denoted shortly by Acc-ADMM, possesses sublinear rates $\mathcal O(1/k)$ and $\mathcal O(1/k^2)$ respectively for convex and partially strongly convex objectives. 

We note that, as summarized in the introduction part, some other optimization solvers, such as entropy regularization methods and interior-point methods, can also be applied to transport-like problems considered here. However, entropy-based methods provide approximate solutions only with a fixed tolerance, which is almost the same magnitude as the regularization parameter. Interior-point methods utilize the barrier function and require linear system solver as well. Hence, it would be interesting to studying the efficiency comparison between PCG and AMG, and we leave this as our future topic.
\subsubsection{Optimal transport}
\label{sec:num-part2-ot}
Let us focus on the optimal mass transport \cref{eq:DOT-X} with $m=n\in\mathbb N$. The mass distributions $\mu,\,\nu\in \R^n_+$ are generated randomly, and we consider two kinds of cost matrices:
\begin{itemize}
	\item {\it Random cost}: 
	\begin{equation}\label{eq:rand-cost}
		C = \left(C_{ij}\right)_{n\times n}\quad\text{with}\quad C_{ij}\sim \mathcal U([0,1]),
	\end{equation}
	where $\mathcal U([0,1])$ denotes the uniform distribution on $[0,1]$;
	\item {\it Quadratic distance cost}: 
	\begin{equation}\label{eq:l2-cost}
		C = \left(C_{ij}\right)_{n\times n}\quad\text{with}\quad C_{ij}=\nm{x_i-x_j}^2,
	\end{equation}
	where $\{x_i\}_{i=1}^n$ are the grid points in the uniform subdivision of $\Omega = (0,1)^2$ with mesh size $1/h = \sqrt{n}-1$; see \cref{fig:amg-pcg-test1-mesh}.
\end{itemize}

As discussed in \cref{rem:ak-linear}, our IPD-SsN-AMG converges at least linearly as long as the step size is bounded below $\alpha_k\geq \alpha_0>0$. Practically, we are not allowed to increase $\alpha_k$ as large as we can. Hence, we choose $\alpha_k \geq 1$ for small $k(\leq 10)$ and set $\alpha_k \in(0,1)$ for large $k$. 
For ALM-SsN-PCG, there are two crucial parameters $\sigma_k$ and $\tau_k$; see equation (18) in \cite[Algorithm 1]{li_asymptotically_2020}. Theoretically, letting $\sigma_k$ increase to $\infty$ and $\tau_k$ decrease to $\tau_{\infty}>0$ implies superlinear convergence. However, for the sake of practical computation, we set the moderate choice: $\sigma_k =\mathcal O(k^2)$ and $\tau_k=\mathcal O(1/k)$. Additionally, we provide a warming-up initial guess for both two algorithms by running Acc-ADMM 100 times.

\renewcommand\arraystretch{1.2}
\begin{table}[H]
	\setlength{\tabcolsep}{4pt}
	\begin{tabular}{|c|c|c|cc|c|c|cc|c|c|}
		\hline
		&\multicolumn{4}{c|}{IPD-SsN-AMG} &\multicolumn{4}{c|}{ALM-SsN-PCG} &\multicolumn{2}{c|}{Acc-ADMM}\\ 
		\cline{2-11}
		\multirow{2}{*}{$m = n$} & 	\multirow{2}{*}{$\mathtt{itIPD}$} & \multirow{2}{*}{$\mathtt{itSsN}$} &\multicolumn{2}{c|}{$\mathtt{itamg}$ }  & \multirow{2}{*}{$\mathtt{itALM}$} & \multirow{2}{*}{$\mathtt{itssn}$} &\multicolumn{2}{c|}{$\mathtt{itpcg}$ }&\multirow{2}{*}{it}&\multirow{2}{*}{residual}\\ 		
		\cline{4-5}												\cline{8-9}		
		&   &   &max  &aver&  & &max&aver&&\\
		\hline
		$1000$ &19 &170 &13&7 &46 &286 &731&190  &5000&6.42e-02  \\ \hline
		$2000$ &29 &233 &14&7 &54 &416 &1299&235  &5000&1.05e-01  \\ \hline
		$3000$ &29 & 279&15&7 &57 &463 &2059&284  &5000&3.18e-01   \\ \hline
		$4000$ &39 &311 &13&6 &61 &531 &2100&264  &5000&4.95e-01   \\ \hline							
	\end{tabular}
	\caption{Numerical results for optimal transport with random cost \cref{eq:rand-cost}. } 
	\label{tab:class1-result-rand}
\end{table}
\begin{figure}[H]
	\centering
	\includegraphics[width=10cm]{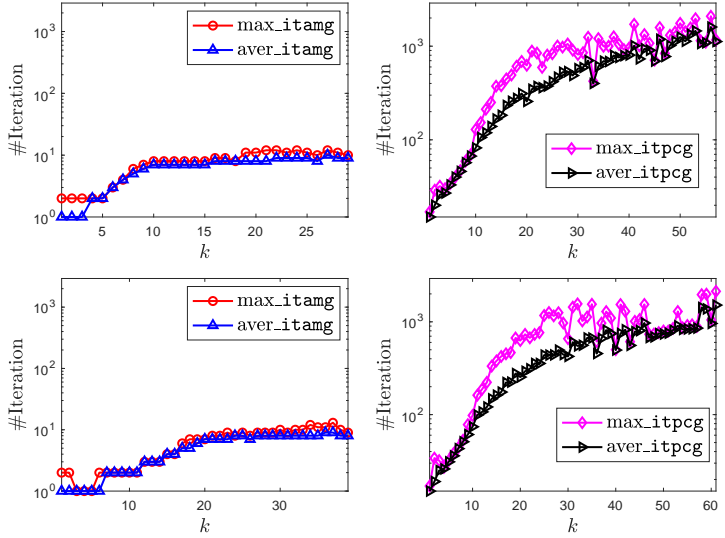}
	\caption{Growth behaviors of $\mathtt{itamg}$ and  $\mathtt{itpcg}$ for optimal transport with random cost: $m=n=3000$ for top row and $m=n=4000$ for bottom row.}
	\label{fig:class1-random}
\end{figure}

Numerical results with random cost  \cref{eq:rand-cost} and quadratic distance cost \cref{eq:l2-cost} are listed in Table \ref{tab:class1-result-rand} and Table \ref{tab:class1-result-l2}, respectively. We record (i) the number of iterations ($\mathtt{itIPD}$ and $\mathtt{itALM}$), (ii) the total number of SsN iterations ($\mathtt{itSsN}$ and $\mathtt{itssn}$), and (iii) the maximum (max) and average (aver) iteration number of AMG ($\mathtt{itamg}$) and PCG ($\mathtt{itpcg}$). Besides, Acc-ADMM is stopped at $k = 5000$ and we report the corresponding relative KKT residuals.

We find that $\mathtt{itIPD}$ ($\mathtt{itSsN}$) is better than $\mathtt{itALM}$ ($\mathtt{itssn}$) for random cost but slightly inferior for quadratic distance cost. Particularly, $\mathtt{itamg}$ is more robust than $\mathtt{itpcg}$, and we also plot the growth behaviors in \cref{fig:class1-random,fig:class1-l2}. As we can see, $\mathtt{itamg}$ stays around 10 while $\mathtt{itpcg}$ increases dramatically as $k$ does.

\renewcommand\arraystretch{1.2}
\begin{table}[H]
	\setlength{\tabcolsep}{4pt}
	
	\begin{tabular}{|c|c|c|cc|c|c|cc|c|c|}
		\hline
		&\multicolumn{4}{c|}{IPD-SsN-AMG} &\multicolumn{4}{c|}{ALM-SsN-PCG} &\multicolumn{2}{c|}{Acc-ADMM}\\ 
		\cline{2-11}
		\multirow{2}{*}{$m = n$} & 	\multirow{2}{*}{$\mathtt{itIPD}$} & \multirow{2}{*}{$\mathtt{itSsN}$} &\multicolumn{2}{c|}{$\mathtt{itamg}$ }  & \multirow{2}{*}{$\mathtt{itALM}$} & \multirow{2}{*}{$\mathtt{itssn}$} &\multicolumn{2}{c|}{$\mathtt{itpcg}$ }&\multirow{2}{*}{it}&\multirow{2}{*}{residual}\\ 		
		\cline{4-5}												\cline{8-9}		
		&   &   &max  &aver&  & &max&aver&&\\
		\hline
		$900$ &29 &215 &11&7 &20 &183 &628&163  &5000&1.03e-01  \\ \hline
		$1600$ &29 &225 &11&7 &21 &226 &979&191  &5000&1.69e-01 \\ \hline
		$2500$ &43 &328 &12&7 &25 &277 &1556&278  &5000&2.51e-01  \\ \hline
		$3600$ &40 &352 &13&7 &35 &360 &1798&640  &5000&3.48e-01  \\ \hline						
	\end{tabular}
	\caption{Numerical results for optimal transport with quadratic distance cost \cref{eq:l2-cost}.} 
	\label{tab:class1-result-l2}
\end{table}
\begin{figure}[H]
	\centering
	\includegraphics[width=10cm]{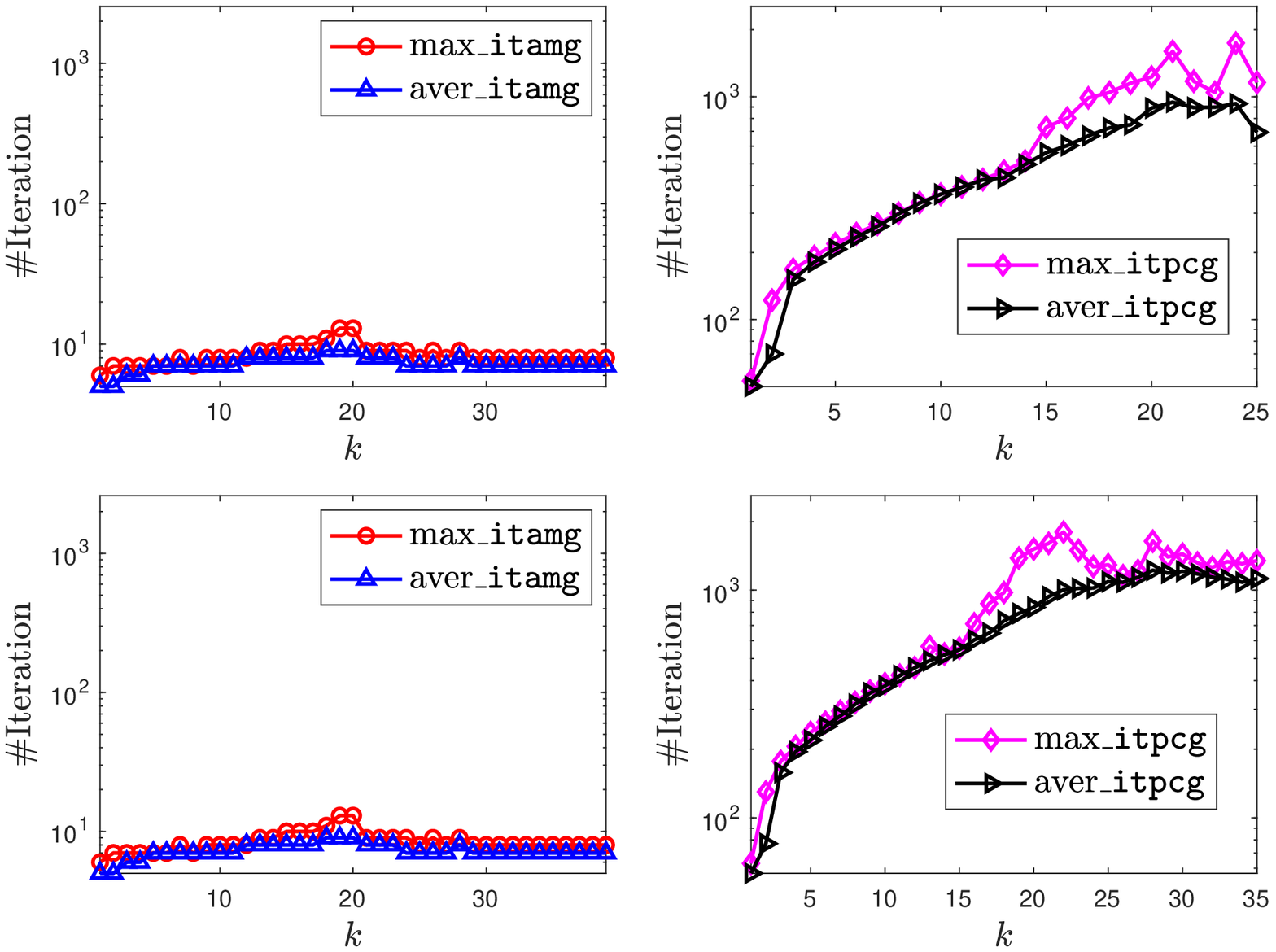}
	\caption{Growth behaviors of $\mathtt{itamg}$ and  $\mathtt{itpcg}$ for optimal transport with quadratic distance cost: $m=n=2500$ for top row and $m=n=3600$ for bottom row.}
	\label{fig:class1-l2}
\end{figure}
\renewcommand\arraystretch{1.2}
\begin{table}[H]
	\setlength{\tabcolsep}{4pt}
	
	\begin{tabular}{|c|c|c|cc|c|c|cc|c|c|}
		\hline
		&\multicolumn{4}{c|}{IPD-SsN-AMG} &\multicolumn{4}{c|}{ALM-SsN-PCG} &\multicolumn{2}{c|}{Acc-ADMM}\\ 
		\cline{2-11}
		\multirow{2}{*}{$m = n$} & 	\multirow{2}{*}{$\mathtt{itIPD}$} & \multirow{2}{*}{$\mathtt{itSsN}$} &\multicolumn{2}{c|}{$\mathtt{itamg}$ }  & \multirow{2}{*}{$\mathtt{itALM}$} & \multirow{2}{*}{$\mathtt{itssn}$} &\multicolumn{2}{c|}{$\mathtt{itpcg}$ }&\multirow{2}{*}{it}&\multirow{2}{*}{residual}\\ 		
		\cline{4-5}												\cline{8-9}		
		&   &   &max  &aver&  & &max&aver&&\\
		\hline
		$2000$&6 &18 &1&1 &8 &25 &15&10 &5000&3.33e-03 \\ \hline
		$3000$&6 &17 &1&1 &7 &24 &14&10 &5000&3.32e-03 \\ \hline
		$4000$&6 &19 &1&1 &7 &24 &20&10 &5000&3.23e-03 \\ \hline
		$5000$&6 &19 &1&1 &7 &24 &26&10 &5000&3.20e-03 \\ \hline
	\end{tabular}
	\caption{Numerical outputs for Birkhoff projection without entry constraint. 	} 
	\label{tab:class2-nolimit-result}
\end{table}
\subsubsection{Birkhoff projection}
\label{sec:num-part2-proj}
We then move to the Birkhoff projection \cref{eq:proj-Bn} with possible entry constraint \cref{eq:Xij}. For this problem, we choose fixed large step size $\alpha_k = 10$ for our IPD-SsN-AMG. Numerical outputs with random data are presented in \cref{tab:class2-nolimit-result,tab:class2-result}. Notice that both two algorithms work well, and $\mathtt{itamg}$ is still superior than $\mathtt{itpcg}$ (which is also quite robust). This might be due to the strongly convex property of the problem itself.

	%

\renewcommand\arraystretch{1.2}
\begin{table}[H]
	\setlength{\tabcolsep}{4pt}
	
	\begin{tabular}{|c|c|c|cc|c|c|cc|c|c|}
		\hline
		&\multicolumn{4}{c|}{IPD-SsN-AMG} &\multicolumn{4}{c|}{ALM-SsN-PCG} &\multicolumn{2}{c|}{Acc-ADMM}\\ 
		\cline{2-11}
		\multirow{2}{*}{$m = n$} & 	\multirow{2}{*}{$\mathtt{itIPD}$} & \multirow{2}{*}{$\mathtt{itSsN}$} &\multicolumn{2}{c|}{$\mathtt{itamg}$ }  & \multirow{2}{*}{$\mathtt{itALM}$} & \multirow{2}{*}{$\mathtt{itssn}$} &\multicolumn{2}{c|}{$\mathtt{itpcg}$ }&\multirow{2}{*}{it}&\multirow{2}{*}{residual}\\ 		
		\cline{4-5}												\cline{8-9}		
		& &   &max  &aver&  & &max&aver&&\\
		\hline
		$1000$&6 &13 &1&1 &7 &21 &15&11 &5000&3.81e-03  \\ \hline
		$2000$ &6 &20 &1&1 &8 &24 &15&10 &5000&3.42e-03 \\ \hline
		$3000$ &6 &18 &1&1 &6 &30 &14&11 &5000&3.30e-03  \\ \hline
		$4000$ &6 &17 &1&1 &5 &42 &20&12 &5000&3.22e-03  \\ \hline
	\end{tabular}
	\caption{Numerical outputs for Birkhoff projection with entry constraint. } 
	\label{tab:class2-result}
\end{table}

	%

\subsubsection{Partial optimal transport}
\label{sec:num-part2-pot}
Finally, let us consider the problem of partial optimal transport \cref{eq:PDOT} with random cost \cref{eq:rand-cost} and quadratic distance cost \cref{eq:l2-cost}. Again, the marginal distributions $\mu$ and $\nu$ and the fraction of mass $a$ are generated randomly. 

From \cref{tab:class4-result-random,tab:class4-result-l2}, we observe that: (i) similar with the results of optimal transport (see \cref{tab:class1-result-rand,tab:class1-result-l2}), $\mathtt{itIPD}$ is much less than $\mathtt{itALM}$ for random cost but slightly more than that for quadratic distance cost; (ii) $\mathtt{itSsN}$ is better than $\mathtt{itssn}$ for both two cases; (iii) $\mathtt{itamg}$ stays robust and outperforms $\mathtt{itpcg}$.

Growth behaviors of $\mathtt{itamg}$ and $\mathtt{itpcg}$ are displayed in \cref{fig:class4-random,fig:class4-l2}. We find that $\mathtt{itamg}$ is temperately increasing within few initial steps while $\mathtt{itpcg}$ is not robust with respect to both the iteration process (i.e., the number $k$) and the problem size.

\renewcommand\arraystretch{1.2}
\begin{table}[H]
	\setlength{\tabcolsep}{4pt}
	
	\begin{tabular}{|c|c|c|cc|c|c|cc|c|c|}
		\hline
		&\multicolumn{4}{c|}{IPD-SsN-AMG} &\multicolumn{4}{c|}{ALM-SsN-PCG} &\multicolumn{2}{c|}{Acc-ADMM}\\ 
		\cline{2-11}
		\multirow{2}{*}{$m = n$} & 	\multirow{2}{*}{$\mathtt{itIPD}$} & \multirow{2}{*}{$\mathtt{itSsN}$} &\multicolumn{2}{c|}{$\mathtt{itamg}$ }  & \multirow{2}{*}{$\mathtt{itALM}$} & \multirow{2}{*}{$\mathtt{itssn}$} &\multicolumn{2}{c|}{$\mathtt{itpcg}$ }&\multirow{2}{*}{it}&\multirow{2}{*}{residual}\\ 		
		\cline{4-5}												\cline{8-9}		
		&   &   &max  &aver&  & &max&aver&&\\
		\hline
		$1000$ &20 &152 &35&14  &66 &274 &452&154   &5000&1.55e-01 \\ \hline
		$2000$ &34 &205 &25&8 &72 &352 &644&145 &5000&4.12e-01  \\ \hline
		$3000$ &34 &225 &23&6 &74 &411 &426&87  &5000&8.71e-01\\ \hline
		$4000$ &33 &238 &29&6 &81 &462 &555&100 &5000&3.22e+01\\ \hline
	\end{tabular}
	\caption{Numerical results for partial optimal transport with random cost \cref{eq:rand-cost}. } 
	\label{tab:class4-result-random}
\end{table}
\renewcommand\arraystretch{1.2}
\begin{table}[H]
	\setlength{\tabcolsep}{4pt}
	
	\begin{tabular}{|c|c|c|cc|c|c|cc|c|c|}
		\hline
		&\multicolumn{4}{c|}{IPD-SsN-AMG} &\multicolumn{4}{c|}{ALM-SsN-PCG} &\multicolumn{2}{c|}{Acc-ADMM}\\ 
		\cline{2-11}
		\multirow{2}{*}{$m = n$} & 	\multirow{2}{*}{$\mathtt{itIPD}$} & \multirow{2}{*}{$\mathtt{itSsN}$} &\multicolumn{2}{c|}{$\mathtt{itamg}$ }  & \multirow{2}{*}{$\mathtt{itALM}$} & \multirow{2}{*}{$\mathtt{itssn}$} &\multicolumn{2}{c|}{$\mathtt{itpcg}$ }&\multirow{2}{*}{it}&\multirow{2}{*}{residual}\\ 		
		\cline{4-5}												\cline{8-9}		
		&   &   &max  &aver&  & &max&aver&&\\
		\hline
		$900$ &31 &154 &19&5  &18 &139 &95&60   &5000&5.95e-01  \\ \hline
		$1600$ &32 &155 &28&5 &22 &176 &98&63 &5000&1.77e-01  \\ \hline
		$2500$ &32 &196 &36&6 &25 &223 &128&64  &5000&6.20e-01 \\ \hline
		$3600$ &31 &204 &46&7 &29 &244 &138&66 &5000&5.60e-01\\ \hline
	\end{tabular}
	\caption{Numerical results for partial optimal transport with quadratic distance cost \cref{eq:l2-cost}. } 
	\label{tab:class4-result-l2}
\end{table}

	%

\begin{figure}[H]
	\centering
	\includegraphics[width=10cm]{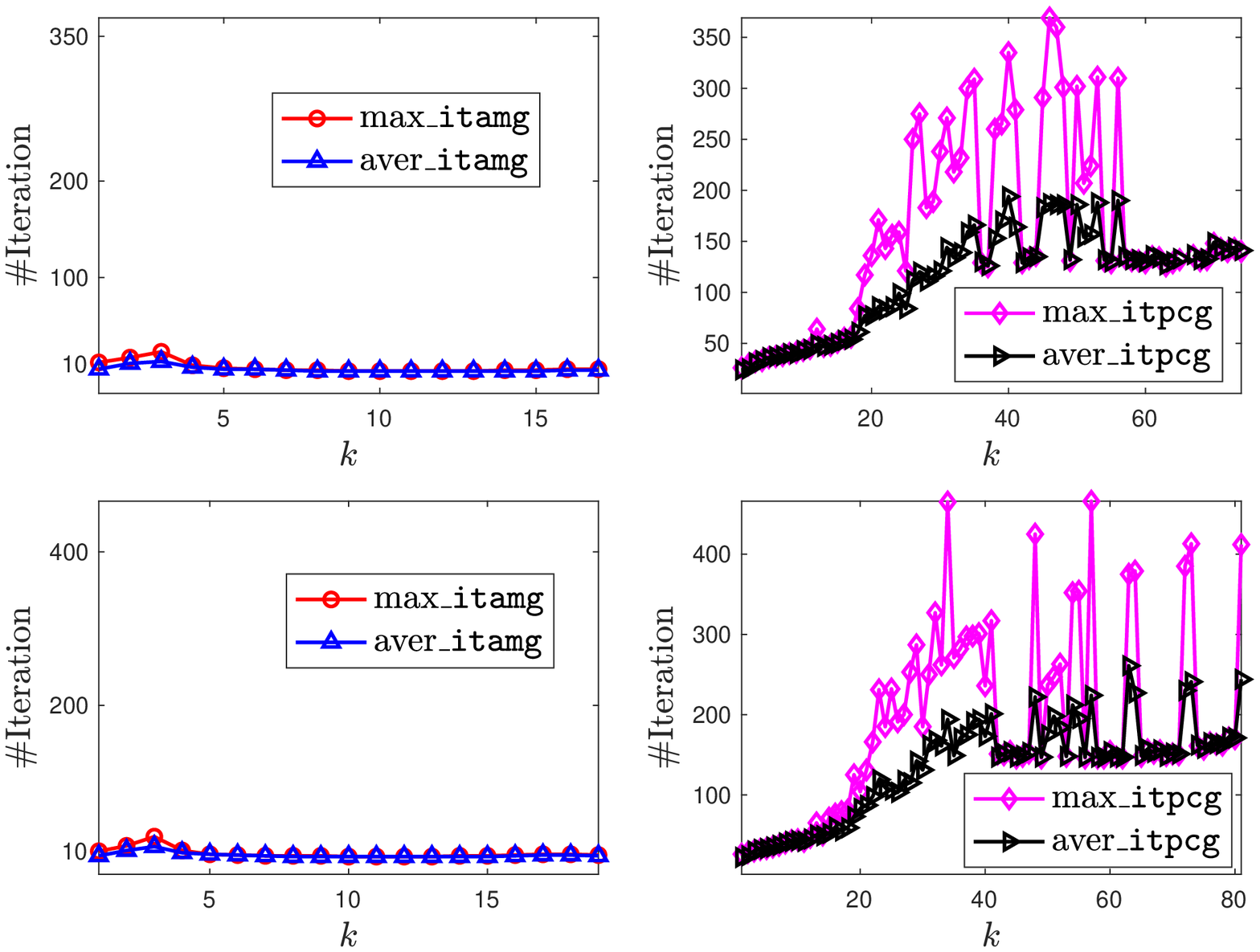}
	\caption{Growth behaviors of $\mathtt{itamg}$ and  $\mathtt{itpcg}$ for partial optimal transport with random cost: $m=n=3000$ for top row and $m=n=4000$ for bottom row.}
	\label{fig:class4-random}
\end{figure}

\begin{figure}[H]
	\centering
	\includegraphics[width=10cm]{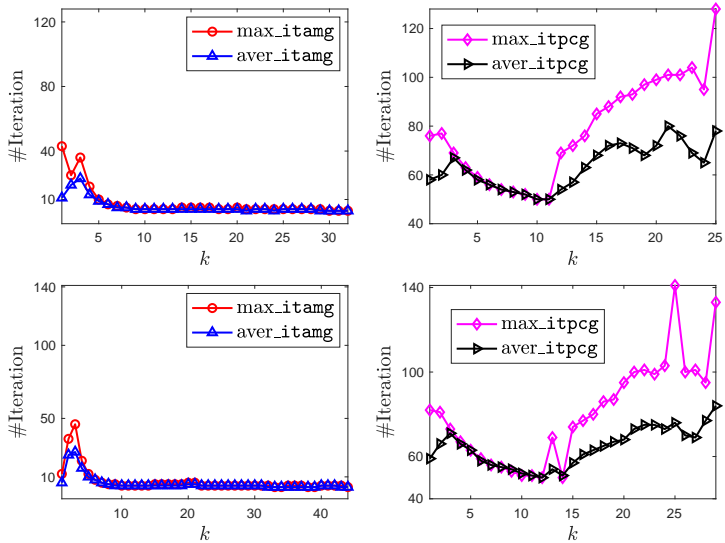}
	\caption{Growth behaviors of $\mathtt{itamg}$ and  $\mathtt{itpcg}$ for partial optimal transport with quadratic distance cost: $m=n=2500$ for top row and $m=n=3600$ for bottom row.}
	\label{fig:class4-l2}
\end{figure}

	%

%
\section{Conclusion}
\label{sec:con}
In this paper, we propose an efficient semismooth Newton-AMG-based inexact primal-dual method for a large class of transport-like problems that share the common feature of marginal distribution constraint. We follow the differential equation solver approach and prove the (super-)linear convergence rate via discrete Lyapunov function. Utilizing the hidden graph structure of the linear system arising from the semismooth Newton iteration, we use the algebraic multilevel method and establish a robust estimate of the two-level case by the Xu-Zikatanov identity. Extensive numerical experiments are also provided to validate the performance of our algorithm.

\bibliographystyle{abbrv}
\bibliography{mylibrary}

\end{document}